\documentclass[final,12pt]{colt2025} 


\title[Visual tests using several safe confidence intervals]{Visual tests using several safe confidence intervals}
\usepackage{times}
\usepackage{{booktabs}}




\usepackage{tikz}
\usetikzlibrary{calc,shapes.arrows,decorations.pathreplacing}
\usetikzlibrary{arrows.meta, patterns.meta}
\tikzset{
  @numline/@lbracket/.style 2 args={insert path={  ([shift={(#1)}]  1.5pt,  4pt) coordinate (area_start) -| (#1) |- +( 1.5pt, -4pt)}},
  @numline/@rbracket/.style 2 args={insert path={#2([shift={(#1)}] -1.5pt, -4pt)                         -| (#1) |- +(-1.5pt,  4pt)}},
  /utils/create to paths for combo/.style args={#1_#2}{
    #1-#2/.style={to path={[insert path/.expanded={
      edge[path only, numline delimiters, @numline/@#1_#2=]  (\tikztotarget)}]}},
    @numline/@#1_#2/.style={to path={[@numline/@l#1={\tikztostart}{},
                                      @numline/@r#2={\tikztotarget}{##1}] ##1 (area_start)}}
},
  /utils/create to paths for combo/.list={
    angle_bracket, angle_angle,  bracket_angle, bracket_bracket,
         _bracket, angle_,              _angle, bracket_}}
\tikzset{
  numline delimiters/.style={thick, draw},
  numline area/.style={pattern color=gray, thin, pattern={Lines[angle=90]}}}

\usepackage{todonotes}

\usepackage{caption}
\usepackage{bbm}
\usepackage{multirow}

\renewcommand{\P}{\mathbb{P}}
\newcommand{\E}{\mathbb{E}}
\newcommand{\R}{\mathbb{R}}
\newcommand{\N}{\mathbb{N}}

\renewcommand{\d}{\mathrm{d}}

\coltauthor{%
 \Name{Timothée Mathieu} \Email{timothee.mathieu@inria.fr}\\
 \addr Inria, Université de Lille, CNRS, Centrale Lille, UMR 9189 – CRIStAL
}

\begin{document}

\maketitle

\begin{abstract}%
We propose a new statistical hypothesis testing framework which decides visually, using confidence intervals, whether the means of two samples are equal or if one is larger than the other. With our method, the user can at the same time visualize the confidence region of the means and do a test to decide if the means of the two populations are significantly different or not by looking whether the two confidence intervals overlap. To design this test we use confidence intervals constructed using e-variables, which provide a measure of evidence in hypothesis testing. We propose both a sequential test and a non-sequential test based on the overlap of confidence intervals and for each of these tests we give finite-time error bounds on the probabilities of error. We also illustrate the practicality of our method by applying it to the comparison of sequential learning algorithms.
\end{abstract}

\begin{keywords}%
Confidence intervals, hypothesis testing, E-values, Anytime confidence intervals%
\end{keywords}

\section{Introduction}

Confidence intervals on the mean are an important tool in statistics and are often used by practitioners to compare the means of several populations ``by eye'', i.e. deciding that the mean of two populations are different if and only if the confidence intervals are disjoint.
In theory, this comparison ought to be a test of hypothesis between the means, but deducing a pairwise test from confidence intervals on individual series of data can be tricky, this is the subject of this paper. 

We see two natural usages of such confidence intervals. One is in exploratory data analysis, to visually decide whether a certain feature is relevant to decide between two categories in a classification problem. A second usage is when comparing several machine learning algorithms. The results of such comparison can be several confidence intervals on the performances, one for each algorithm, and the plot of these confidence intervals can then be used to see quickly which of two algorithms is the best one and get information on the actual mean performance of each algorithm.

If we have two samples $X_1,\dots,X_n$ i.i.d. from a law $P$ and $Y_1,\dots,Y_m$ i.i.d. from a law $Q$, used to construct two confidence intervals $C_n(\alpha; X)$ and $C_m(\alpha;Y)$ such that $\P(\E_P[X] \in C_n(\alpha; X)) \ge 1-\alpha$ and $\P(\E_Q[Y] \in C_m(\alpha;Y)) \ge 1-\alpha$ for some $\alpha \in (0,1)$. We are then tempted to say that if the two confidence intervals $C_n(\alpha;X)$ and $C_m(\alpha;Y)$ do not overlap, then we can reject $H_0:\E_P[X] = \E_Q[Y]$ otherwise we do not reject $H_0$. We call this procedure a test of overlap. It is easy to see that such a test has a type I error at most $2\alpha$ however it is known that the actual error of such test is often lower than $2\alpha$, and this asks the question of what is the actual error of this test, and what can we say about its type II error. 

In the case of Gaussian random variables, \cite{goldstein1995graphical} give a bound on the type I error of a test of overlap  provided that the variances of $P$ and $Q$ are known. \textbf{Our main contribution} is to provide confidence intervals and associated \textbf{visual tests} of overlap. We bound all the probabilities of error of our test giving \textbf{theoretical guarantees in a nonparametric setting} in which we suppose that the distributions are bounded. We construct the confidence intervals using the betting principle~\citep{waudby2024estimating} and the notion of e-variables which can be seen as a notion of evidence: the higher the e-variable the less likely it is that $H_0$ is true. It is convenient to use e-variables because they can easily be combined and provide a natural interpretation as evidence in a test. We propose both anytime and fixed time results allowing for a sequential test in which the data come as a stream.

More precisely, in the sequential setting, let $X_1,X_2,\dots$ be i.i.d. from a law $P$  and $Y_1,Y_2,\dots$ be i.i.d. from a law $Q$ independents of $X_1,X_2,\dots$, with $P$ and $Q$ having bounded supports. With these samples, we construct sequences of anytime confidence intervals $\{C_n(\alpha;X)\}_n$ and $\{C_m(\alpha,Y)\}_m$. In the non-sequential setting, we have access to a sample $X_1,\dots,X_n$ and a sample $Y_1,\dots,Y_n$ from which we construct two confidence intervals $C_n(\alpha;X)$ and $C_m(\alpha,Y)$.
Then, using these confidence intervals, we design a sequential and a non-sequential test of visual overlap to decide between the following three hypotheses
$$H_1^-: \E_P[X] < \E_Q[Y] - \Delta,\quad H_0 : \E_P[X] = \E_Q[Y], \quad H_1^+: \E_P[X] > \E_Q[Y] + \Delta,$$
for some effect size $\Delta>0$ assumed fixed in advance. We show error bounds that are summarized in Table~\ref{table:errors} (see the Theorems~\ref{th:typeIerror},~\ref{th:typeII_anytime},~\ref{th:typeII_hoeffding},~\ref{th:typeIII} for the non-asymptotic error bounds). Remark that the three hypotheses can easily be reduced to the more usual bilateral test if needed.
\begin{figure}[h]
\begin{center}
\small
\begin{tabular}{ |p{2cm}||p{1.5cm}|p{1.5cm}|p{1.5cm}|p{1.5cm}|p{1.5cm}|p{1.5cm}|  }
 \hline
 &\multicolumn{2}{|c|}{$\E_P[X] < \E_Q[Y] - \Delta$} & \multicolumn{2}{|c|}{$\E_P[X] = \E_Q[Y]$} & \multicolumn{2}{|c|}{$\E_P[X] > \E_Q[Y] +\Delta$} \\
 \hline
Sequential & Yes & No & Yes & No & Yes & No  \\
 \hline
\hline
Decide $H_1^-$    &  No error   &  No error   & $ \alpha$ & $ \alpha e^{-\Delta\Omega(\sqrt{n})}$& $2\alpha^2$ & $2\alpha^2$\\
 \hline
Decide $H_0$    &  $\alpha^{3/2}$   &  $\alpha^{3/2}$  & No error& No error& $\alpha^{3/2}$ & $\alpha^{3/2}$\\
 \hline
Decide $H_1^+$    &  $2\alpha^2$   &  $2\alpha^2$   & $ \alpha$ & $ \alpha e^{-\Delta\Omega(\sqrt{n})}$& No error & No error\\
 \hline
\end{tabular}
\end{center}
\caption{Approximate probabilities of error of the overlap test for $n$ sufficiently large.\label{table:errors}}
\end{figure}

Finally, in Section~\ref{sec:xp} we showcase the practical efficiency of our tests and their usage to compare the performance of some reinforcement learning and bandit algorithms.

\paragraph{Related works on tests from confidence intervals}
Previous studies have primarily focused on the Gaussian setting with known variance. The first work on this problem dates back to~\cite{goldstein1995graphical}, after which several applied papers~\citep{AUSTIN2002194,knol2011mis,schenker2001judging} have either proposed alternative methods or commentary on how to do visual tests using confidence intervals, but always in the case of a Gaussian model with known variances.

\paragraph{Related works on nonparametric confidence intervals using E-variables}
E-values~\citep{10.1093/jrsssb/qkae011,ramdas2023game,ramdas2024hypothesis} are an alternative to p-values to perform hypothesis testing. The bigger the e-value is, the more confident we are that we should reject $H_0$, which means that e-values are a measure of \textbf{evidence}. Moreover, e-values are often \textbf{anytime-valid}, in other words they can deal with data that are collected sequentially. Of particular interest for our article, we will look at the technique leveraging e-values to construct confidence intervals introduced in \cite{waudby2024estimating} in the case of bounded random variables. 

\section{Construction of confidence intervals and test of overlap}\label{sec:evalue}
In this section we introduce the confidence intervals and tests that are the subject of our analysis.

\subsection{Construction of confidence intervals using e-variables}
First, we present briefly what are e-variables and how to use them to construct confidence intervals. For a more comprehensive  background on this subject, refer to~\cite{waudby2024estimating}.

\paragraph{E-variables}
Given a test $H_0$ versus $H_1$, an e-variable is a non-negative random variable $E_n$ constructed from a sample $X_1,\dots,X_n$ following a law $P$ such that if $P \in H_0$ then $\E[E_n] \le 1$, the observed value of an e-variables is called an e-value. Using Markov inequality, we have that $\P(E_n \ge 1/\alpha)\le  \alpha \E[E_n] \le \alpha$ hence e-variables are a way to define tests with controlled type I error. We can often get an even stronger result, if $E_n$ is a super-martingale, using Ville's inequality to conclude to an anytime result: $\P(\exists n \ge 1 \mid \ E_n \ge 1/\alpha)\le  \alpha$. In addition to type I error, we also want to have $E_n$ as large as possible under $H_1$, which will mean having high power for the test.

For our purpose, we consider $E_n(m;X,W)$ the e-variables (inspired by~\cite[Section 4.4]{waudby2024estimating}) defined by 
\begin{equation}\label{eq:def_En} 
E_n(z;X,W) = \frac{1}{2}\max\left(\prod_{t=1}^n (1+W_t(X)(X_t -z)), \prod_{t=1}^n (1-W_t(X)(X_t -z))\right),
\end{equation}
where $W_1(X),\dots,W_n(X)$ are positive random variables. We suppose that each $W_t(X)$ is a positive random variable that is measurable with respect to the filtration spanned by $X_1,\dots,X_{t-1}$ (i.e. $W_t$ depends only on $X_1,\dots,X_{t-1}$) and is such that $W_t(X)|x -m|$ is smaller than $1$ for all $x$ in the support of $P$. It is easy to see that $\E_P[E_n(\E_P[X])]\le 1$ which means that $E_n(z;X,W)$ is an e-variable for the test $H_0:\E_P[X] = z$ versus $H_1:\E_P[X] \neq z$.

\paragraph{Confidence intervals from e-variables}

Using the function $E_n$, we can construct a the set $C_n(\alpha;X,W)$ by
$$C_n(\alpha;X,W) = \{ z \in \R : \ E_n(z;X,W) \le 1/\alpha\}.$$
Using Markov's inequality and $\E_P[E_n(\E_P[X])]\le 1$, this leads to $\P\left(\E_P[X] \in C_n(\alpha;X)\right) = \P\left(E_n(\E_P[X]) \le 1/\alpha\right) \le \alpha$ and because $E_n$ is convex (see Lemma~\ref{lem:convex}), $C_n(\alpha;X,W)$ is indeed an interval.

Remark that $C_n(\alpha;X,W)$ is in fact an anytime confidence interval of level $1-\alpha$, because it can be seen that $E_n(\E_P[X];X,W)$ is a non-negative super-martingale, and we can use Ville's inequality instead of Markov inequality to conclude that 
$$\P\left(\forall n \ge 1 \mid \E_P[X] \in C_n(\alpha;X)\right) \ge 1-\alpha.$$ 
The precise form of the weight sequence $\{W_t\}_t$ that we will consider is a trade-off between the length of the confidence intervals (see Section~\ref{sec:power} and discussions on predictable plugin in \cite{waudby2024estimating}) and the probability of having an overlap between two intervals when the means are the same (see Section~\ref{sec:typeI}).

\begin{remark}[Notations]
In what follows, we will use the shorthand $E_n(z; X)$ and $C_n(\alpha;X)$ to mean $E_n(z; X,W)$ and $C_n(\alpha;X,W)$ respectively when the weight sequence considered is obvious in context. For an interval $I$, we denote $L(I)=\max_{x \in I} x - \min_{y \in I} y$ for the length of the interval $I$. We denote $\ I \ge a\ $  if $\ \min_{x \in I} x \ge a\ $ and $\ I \ge J + a\ $ if $\ \min_{x \in I} x \ge \max_{y\in J} y + a\ $.
\end{remark}
\subsection{Tests of overlap}

Let $P$, $Q$ be two distributions with bounded supports. Inspired by works on directional hypotheses tests~\citep{Leventhal-1996}, we define the following three hypotheses:
$$H_1^-: \E_P[X] < \E_Q[Y] - \Delta, \quad H_0 : \E_P[X] = \E_Q[Y], \quad H_1^+: \E_P[X] > \E_Q[Y] + \Delta.$$
We are considering two settings: sequential test in which the data come in stream and fixed-time (non-sequential) test.

\paragraph{Fixed-time test} Suppose $C_n(\alpha, X, W)$ and $C_n(\alpha, Y, W)$ are fixed time confidence intervals of $\E_P[X]$ and $\E_Q[Y]$. The test of overlap is to decide on $H_1^+$ if $C_n(\alpha, X, W) > C_m(\alpha, Y, W)$, decide on $H_1^-$ if $C_n(\alpha, X, W) < C_m(\alpha, Y, W)$ and decide on $H_0$ if $C_n(\alpha, X, W) \cap C_m(\alpha, Y, W)\neq \emptyset$. See Equation~\eqref{eq:weight_hoeffding_fixed_time} for the actual weights that will be used for this case. 

\paragraph{Sequential test} On the other hand, in the case of sequential test, we cannot just consider the three decisions used in the fixed-time test because when there are not enough samples, the confidence intervals are very likely to overlap and the test would almost always decide on $H_0$ early and have a very low power. Instead, we use the test described in Algorithm~\ref{algo:anytime} that depends on the effect size that we hope to detect. Remark that the choice of which distribution to sample at each iteration on line 4 of Algorithm~\ref{algo:anytime} can depend on the past. The only requirement is that we cannot sample from $P$ and $Q$ and then decide which to keep according to what was sampled: everything observed must be included in the test. See Equation~\eqref{eq:lambda_any_time} for the actual weights that will be used for this case. 

\begin{algorithm2e}[h]
\caption{Anytime test of overlap}\label{algo:anytime}
\vspace{0.5em}
\begin{enumerate*}
\item Collect $t_0\ge 0$ samples $X_1,\dots,X_{t_0}$ from $P$ and $Y_1,\dots,Y_{t_0}$ from $Q$.
\item While no decision is taken, do
\begin{enumerate*}
\item Collect a sample either from $P$ or from $Q$.
\item If $C_n(\alpha;X,W) > C_m(\alpha;Y,W) $, decide on $H_1^+$.
\item If $C_n(\alpha;X,W) < C_m(\alpha;Y,W) $, decide on $H_1^-$.
\item If $C_n(\alpha;X,W) \cap C_m(\alpha;Y,W) \neq \emptyset$ and $L(C_n(\alpha;X,W) \cup C_m(\alpha;Y,W))\le \Delta$, accept $H_0$.
\item Else, continue to collect.
\end{enumerate*}
\end{enumerate*}
\vspace{-0.5em}
\end{algorithm2e}

\begin{remark}[Burn-in period]
In Algorithm~\ref{algo:anytime} we suppose that $t_0$ samples are collected initially from each probability, this is a burn-in period that will greatly improve the type I error bound that we obtain and although one can still apply our result with $t_0 = 1$ we will see in the experimental section that allowing for a first non-sequential phase greatly improve our error bounds.
\end{remark}

\begin{remark}[directional versus bilateral test]
Instead of the three hypotheses $H_1^-, H_0$ and $H_1^+$, we could also use the more usual bilateral test $H_0:\E_P[X] = \E_Q[Y]$ and $H_1:|\E_P[X] - \E_Q[Y]|>\Delta$. However, we prefer to explicitly consider three distinct hypotheses as it seems more useful in practice to decide which of $P$ or $Q$ has a largest mean. It is possible to modify our algorithm to rejects $H_0$ whenever the confidence intervals do not intersect and this answers the problem of bilateral test. Our theoretical analysis can then be used to bound the associated type I and type II error.
\end{remark}

\section{Length of confidence intervals and probability of non-intersection}
The first step of our analysis is to study some basic properties of our confidence intervals. In this paper we have two concurrent objectives: having tight confidence intervals (which we will also link to the power of the test) and having small probability of non-overlap under $H_0$. These two objectives conflict with one another because the smaller the confidence intervals, the larger the probability of non-overlapping. Hence, we will need to tune the sequence of $W_t$ to fit our framework. The first step towards this goal is to bound the length of the confidence intervals and to  control the distance between the two confidence intervals under $H_0$.

\subsection{Length of confidence intervals and calibration of the weights $W_t$}\label{sec:weights}
The following lemma controls the length of the confidence interval $C_n(\alpha; X,W)$ in the simple case in which the weights $\{W_t\}_{t}$ are all equal.
\begin{lemma}[Length of confidence interval -- constant weights]\label{lem:length}
Let $w>0$ and $n \in \N$, suppose that for all $1\le t\le n$, we have $W_t(X)=w$, and define $v = w^2/(1-w(b_P-a_P))$. Suppose that 
$$ nw^2 \ge   2v\left(\frac{nv}{2}\widehat{\sigma}_n^2 +\log(2/\alpha)\right),$$
where $\widehat{\sigma}_n^2 = \frac{1}{n}\sum_{i=1}^n(X_i - \frac{1}{n}\sum_{i=1}^nX_i)^2$ is the empirical variance.
Then the length $L(C_n(\alpha;X,W))$ of the confidence interval $C_n(\alpha;X,W)$ satisfies 
$$L(C_n(\alpha;X,W)) \le   2\frac{nw - \sqrt{n^2w^2 - 2nv\left(\frac{nv}{2}\widehat{\sigma}_n^2 +\log(2/\alpha)\right)}}{nv}.$$
\end{lemma}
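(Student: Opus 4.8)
The plan is to work directly with the logarithmic form of the defining inequality. Since all weights equal $w$, a point $z$ lies in $C_n(\alpha;X,W)$ if and only if both $\sum_{t=1}^n \log(1+w(X_t-z)) \le \log(2/\alpha)$ and $\sum_{t=1}^n \log(1-w(X_t-z)) \le \log(2/\alpha)$. I would bound how far to the right of the empirical mean $\overline{X}_n = \frac1n\sum_{t=1}^n X_t$ such a $z$ can sit by showing that the second sum becomes too large as soon as $z=\overline{X}_n+s$ with $s$ in a suitable range; the left endpoint is then treated symmetrically using the first sum.

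Write $z=\overline{X}_n+s$ with $s\ge 0$ and set $u_t = w\bigl(s-(X_t-\overline{X}_n)\bigr)$, so that $\sum_{t=1}^n\log(1-w(X_t-z)) = \sum_{t=1}^n\log(1+u_t)$. Because $X_t-\overline{X}_n\le b_P-a_P$ and $s\ge 0$, one has $u_t\ge -w(b_P-a_P)=:-c$ with $c<1$. The elementary inequality I would use is $\log(1+u)\ge u-\frac{u^2}{2(1-c)}$ for every $u\ge -c$: differentiating the difference gives $u\cdot\frac{u+c}{(1-c)(1+u)}$, which has the sign of $u$, so the difference attains its minimum value $0$ at $u=0$. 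Summing this over $t$, and using $\sum_{t=1}^n(X_t-\overline{X}_n)=0$ so that $\sum_t u_t = nws$ and $\sum_t u_t^2 = nw^2(s^2+\widehat{\sigma}_n^2)$, I get
$$\sum_{t=1}^n\log(1-w(X_t-z)) \ \ge\ nws - \frac{nv}{2}\bigl(s^2+\widehat{\sigma}_n^2\bigr),$$
with $v=w^2/(1-c)$ as in the statement. Hence $z\notin C_n(\alpha;X,W)$ whenever $\frac{nv}{2}s^2 - nws + \bigl(\frac{nv}{2}\widehat{\sigma}_n^2+\log(2/\alpha)\bigr)<0$.

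This last expression is a quadratic in $s$ whose discriminant is $n^2w^2 - 2nv\bigl(\frac{nv}{2}\widehat{\sigma}_n^2+\log(2/\alpha)\bigr)$, which is nonnegative precisely under the hypothesis $nw^2\ge 2v\bigl(\frac{nv}{2}\widehat{\sigma}_n^2+\log(2/\alpha)\bigr)$. Its roots are $s_\pm = \bigl(nw \pm \sqrt{n^2w^2-2nv(\frac{nv}{2}\widehat{\sigma}_n^2+\log(2/\alpha))}\bigr)/(nv)$, and the quadratic is negative exactly on $(s_-,s_+)$, so every $z$ in $(\overline{X}_n+s_-,\overline{X}_n+s_+)$ is excluded from $C_n(\alpha;X,W)$. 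Now $C_n(\alpha;X,W)$ is an interval by Lemma~\ref{lem:convex}, and it contains $\overline{X}_n$: by the AM--GM inequality each of the two products evaluated at $z=\overline{X}_n$ is at most $\bigl(\frac1n\sum_t(1\pm w(X_t-\overline{X}_n))\bigr)^n=1$, so $E_n(\overline{X}_n;X,W)\le \frac12\le 1/\alpha$. An interval containing $\overline{X}_n$ but disjoint from the whole open interval $(\overline{X}_n+s_-,\overline{X}_n+s_+)$ must have right endpoint at most $\overline{X}_n+s_-$; the symmetric argument, applied to $z=\overline{X}_n-s$ with the product $\prod_t(1+w(X_t-z))$, shows the left endpoint is at least $\overline{X}_n-s_-$. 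Therefore $L(C_n(\alpha;X,W))\le 2s_-$, which is the claimed bound.

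Most of the work is routine: the log-inequality with the $(1-c)^{-1}$ factor and the bookkeeping that turns the sum into a quadratic in $s$ via $\widehat{\sigma}_n^2$. The one point that needs care is the final logical step — one must use that $C_n(\alpha;X,W)$ is a single interval anchored at $\overline{X}_n$ to rule out the a priori possibility that it extends beyond $\overline{X}_n+s_+$, where the lower bound on the log-product is no longer informative. Convexity of $E_n$ is exactly what makes this step go through.
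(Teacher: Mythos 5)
Your proof is correct and follows essentially the same route as the paper's: the same effective lower bound $\log(1+u)\ge u - \frac{u^2}{2(1-w(b_P-a_P))}$ (which the paper reaches via the intermediate inequality $\log(1+x)\ge \frac{x(2-c)}{2-c+x}$), the same quadratic in $z-\overline{X}_n$ obtained from the bias--variance decomposition, and the same discriminant condition and roots. Your closing step --- verifying $\overline{X}_n\in C_n(\alpha;X,W)$ by AM--GM and invoking convexity of $\log E_n$ to rule out the interval extending past $\overline{X}_n+s_+$, where the quadratic lower bound is no longer informative --- is in fact spelled out more carefully than in the paper, which leaves this point implicit.
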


Lemma~\ref{lem:length} offers some insight on how to choose the weights. Suppose that $n$ is large enough to have that the condition of Lemma~\ref{lem:length} is satisfied, as $n$ goes to infinity we have that
\begin{equation}\label{eq:length_hoeffding_1}
L(C_n(\alpha; X, W)) \lesssim 2\frac{\left(\frac{nw^2}{2(1-w(b_P-a_P))}\widehat{\sigma}_n^2 +\log(2/\alpha)\right)}{nw},
\end{equation}
where $\lesssim$ is taken here to mean ``for $n$ large enough''. Minimizing the right-hand-side of Equation~\eqref{eq:length_hoeffding_1}, we arrive at
$$w = \frac{1}{(b_P-a_P) + \sqrt{\frac{n \widehat{\sigma}_n^2}{2\log(2/\alpha)}}}.$$
Based on this equation, we could define the following weights for fixed-time Bernstein's type confidence interval
\begin{equation}\label{eq:lambda_fixed_time}
W_t(X) =  \frac{1}{(b_P-a_P) + \sqrt{\frac{n \widehat{\sigma}_{t-1}^2}{2\log(2/\alpha)}}}.
\end{equation}
However, we will see in Section~\ref{sec:power} that the power of the overlap test with this type of weights cannot be controlled in our setting in which the variance is unknown. Instead, we will use that $\widehat{\sigma}_n^2(x)$ is always smaller than $\frac{1}{4(b_P-a_P)^2}$ and define the following weights (see also Proposition 1 in \cite{waudby2024estimating}).
\begin{equation}\label{eq:weight_hoeffding_fixed_time}
w_t^f(X) =\frac{1}{b_P-a_P}\frac{1}{1+ \sqrt{\frac{n}{8c^2\log(2/\alpha)}}},
\end{equation}
where $c\ge1$ is a constant. 
\begin{remark}[Hoeffding-type weights]
If $c=1$ we call the weights in Equation~\eqref{eq:weight_hoeffding_fixed_time} Hoeffding-type weights as this amount to bounding $\widehat{\sigma}_n^2$ by $1/(4(b_P-a_P))$ in Equation~\eqref{eq:lambda_fixed_time}. The term ``Hoeffding-type'' weights may be misleading because $C_n(\alpha; X, w^f)$ improves upon Hoeffding inequality. Indeed, for $n$ large, the first order term for the length of the confidence interval with weights given by Equation~\eqref{eq:weight_hoeffding_fixed_time} is asymptotically 
$$C_n(\alpha; X, w)\le 2 \left(\left(\frac{\widehat{\sigma}_n^2}{b_P-a_P}\right)\sqrt{\frac{2\log(2/\alpha)}{n}} + (b_P-a_P)\sqrt{\frac{\log(2/\alpha)}{8n}}\right),$$
which is strictly better than Hoeffding's inequality by a factor at most $2$. 
\end{remark}

\begin{remark}[Tuning the constant $c$]
The constant $c$ is a tuning constant, in principle if we knew the standard deviation $\sigma$, we would set $c$ to $1/(2\sigma)$ and recover weights close to Equation~\eqref{eq:lambda_fixed_time}. However in general $\sigma$ is not known, then a safe value is $c=1$ to recover a Hoeffding-type confidence interval. If instead we think that the data will be well concentrated we can use a larger $c$. Whatever the value of $c$, $C_n(\alpha; X, w)$ remain a valid confidence interval, $c$ impacts on the theoretical error bounds of the test and the width of the confidence intervals.
\end{remark}

In the case of anytime confidence interval used in Algorithm~\ref{algo:anytime}, we follow the same reasoning as \cite[Section 3.1]{waudby2024estimating} replacing the $n$ from Equation~\eqref{eq:lambda_fixed_time} by a $t\log(1+t)$. We also introduce a period of deterministic weight before some time $t_0\ge1$.
\begin{equation}\label{eq:lambda_any_time}
W_t^{a}(X) =
\begin{cases}
\frac{1}{b_P-a_P}\left(\frac{1}{1+ \sqrt{\frac{t\log(t+1)}{8c^2\log(2/\alpha)}}}\right) & \text{ if }t \le t_0\\
\frac{1}{(b_P-a_P) + \sqrt{\frac{t\log(1+t)\widehat{\sigma}_{t-1}^2}{2\log(2/\alpha)}}} \wedge \frac{1}{b_P-a_P}\left(\frac{1}{1+ \sqrt{\frac{t_0\log(t+1)}{8c^2\log(2/\alpha)}}}\right)  & \text{ if }t > t_0
\end{cases}
\end{equation}
$W_t^{a}(X)$ is a predictable sequence tuned to get a confidence interval asymptotically of size $O(\sqrt{\log(t)/t})$ as explained in \cite[Section 3.1]{waudby2024estimating}. We will see when bounding the overlap probability (Section~\ref{sec:typeI}) the reason for introducing $t_0$ and $c$.

\subsection{Probability of non-intersection under $H_0$}\label{sec:proba_overlap}

The following lemma controls the probability of two intervals being separated by more than a given value $\ell_{n,m}>0$.

\begin{lemma}\label{lem:elln}
Let $X_1,X_2,\dots$ be i.i.d. with law $P$ with support $[a_P, b_P]$ and $Y_1,Y_2,\dots$ be i.i.d. with law $Q$with support $ [a_Q, b_Q]$ such that $\E_P[X] = \E_Q[X] = \mu$. Let $n,m \in \mathbb{N}$, $\eta>0$ and $\eta'>0$ and let $\ell_{n,m} \ge 0$ be a real random variable such that
\begin{align*}
\ell_{n,m} \ge \frac{\log(1/\alpha)}{2}\max&\left(\frac{\eta'}{\sum_{t=1}^n \frac{W_t(X)}{1+W_t(X)(b_P-a_P)} }-\frac{\eta}{\sum_{t=1}^n \frac{W_t(X)}{1-W_t(X)(b_P-a_P)}} \right.\\
&\phantom{=}\left., \frac{\eta'}{\sum_{t=1}^m \frac{W_t(Y)}{1+W_t(Y)(b_Q-a_Q)} }-\frac{\eta}{\sum_{t=1}^m \frac{W_t(Y)}{1-W_t(Y)(b_Q-a_Q)}} \right).
\end{align*}
Then,
$$\P\left(\exists n,m\ge1 : \ C_n(\alpha;X,W)\le   C_m(\alpha;Y,W) - \ell_{n,m} \right)\le  \alpha^{\eta'+1} + \frac{1}{4}\alpha^{2-\eta},$$
and
$$\P\left(\exists n,m\ge1 : \ C_n(\alpha;X,W)\ge    C_m(\alpha;Y,W) + \ell_{n,m} \right)\le  \alpha^{\eta'+1} + \frac{1}{4}\alpha^{2-\eta}.$$
\end{lemma}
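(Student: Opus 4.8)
The plan is to reduce the event "$C_n(\alpha;X,W) \le C_m(\alpha;Y,W) - \ell_{n,m}$" to an event about the e-variables $E_n(\mu;X,W)$ and $E_m(\mu;Y,W)$ exceeding $1/\alpha$, and then control each of those by a (super)martingale / Ville argument. First I would use the geometry of the confidence intervals: since $\mu = \E_P[X] = \E_Q[Y]$, if the interval $C_n(\alpha;X,W)$ lies entirely to the left of $C_m(\alpha;Y,W) - \ell_{n,m}$, then in particular the point $\mu$ cannot be close to both intervals. More precisely, either $\mu \notin C_n(\alpha;X,W)$ (so $E_n(\mu;X,W) > 1/\alpha$), or $\mu$ is in $C_n$ but then $C_n$ sits at least $\ell_{n,m}$ below $C_m$, which forces $\mu$ to be below every point of $C_m$ by some margin; the condition on $\ell_{n,m}$ is engineered precisely so that this margin is large enough to make $E_m(\mu;Y,W)$ exceed $1/\alpha^{1+\eta'}$ (or a similar elevated threshold). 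I would make this split quantitative by writing, for $z$ the left endpoint of $C_m$, an explicit lower bound on $E_m(z';Y,W)$ for $z'$ sufficiently far below $z$.

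The second step is to get the quantitative lower bound on $E_n(z;X,W)$ as a function of the distance from $z$ to the confidence interval. Using the definition \eqref{eq:def_En} and the concavity of $\log(1+u)$, I would lower-bound $\log\bigl(\prod_t (1 - W_t(X)(X_t - z))\bigr)$ (or the other branch) by a linear-minus-quadratic expression in $z - \widehat{\mu}_n$, of the form $\sum_t \frac{W_t(X)}{1 \pm W_t(X)(b_P-a_P)}\cdot(\text{something})$; this is exactly where the quantities $\sum_t \frac{W_t(X)}{1+W_t(X)(b_P-a_P)}$ and $\sum_t \frac{W_t(X)}{1-W_t(X)(b_P-a_P)}$ appearing in the statement come from. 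Matching this with the threshold $\log(1/\alpha)$ (for the interval endpoint) versus $(1+\eta')\log(1/\alpha)$ and $(2-\eta)\log(1/\alpha)$ (for the elevated thresholds) yields the stated formula for $\ell_{n,m}$: the $\eta'/\sum \frac{W_t}{1+W_t(b-a)}$ term controls how far one must go past the right endpoint, and the $-\eta/\sum \frac{W_t}{1-W_t(b-a)}$ term corrects for the width of the interval on the other side.

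The third step is the probabilistic bound. On the event in question, at least one of two things happens simultaneously: either $E_n(\mu;X,W) \ge 1/\alpha$ \emph{and} (because $C_n$ is strictly separated and below) $E_m(\mu;Y,W) \ge 1/\alpha^{1+\eta'}$ with one of the two branches of the max in $E_m$ being the large one — contributing the $\alpha^{1+\eta'}$ term via Ville on $E_m(\mu;Y,W)$ restricted to that branch (the factor $1/2$ in the definition of $E_n$ explains the $\frac14$ on the $\alpha^{2-\eta}$ term, since there both branches/both samples interact) — or a symmetric configuration contributing $\frac14\alpha^{2-\eta}$. I would apply Ville's inequality to the nonnegative supermartingales $\prod_t(1\pm W_t(\mu - X_t))$ and a union bound over the (at most a constant number of) branch/side combinations to collect the two terms $\alpha^{\eta'+1} + \frac14\alpha^{2-\eta}$, uniformly over $n,m\ge 1$. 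The second inequality in the lemma is identical by swapping the roles of left/right (equivalently replacing $W_t(X_t-z)$ by $-W_t(X_t-z)$), so no new work is needed.

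\textbf{Main obstacle.} The delicate part is the bookkeeping in step two: getting a lower bound on $E_m$ (not just an upper bound, which is the usual direction for confidence-interval arguments) that is tight enough that the threshold crossings at levels $1/\alpha^{1+\eta'}$ and $1/\alpha^{2-\eta}$ translate into \emph{exactly} the asymmetric $\frac{\eta'}{\sum W_t/(1+W_t(b-a))} - \frac{\eta}{\sum W_t/(1-W_t(b-a))}$ form, rather than something looser. This requires carefully using that $E_m$ is the max of two products and tracking which branch dominates when $\mu$ is far to one side of $C_m$; the $1/2$ prefactor and the distinct denominators $1\pm W_t(b-a)$ have to be handled without slack. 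Once that quantitative lower bound is in hand, the supermartingale/Ville step and the union bound are routine.
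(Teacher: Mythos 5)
Your proposal assembles the right ingredients — the derivative bounds $\sum_t W_t/(1\pm W_t(b-a))$ on $\log E_n$, Ville's inequality, a single-sample term at an elevated threshold giving $\alpha^{1+\eta'}$, and a two-sample term giving $\tfrac14\alpha^{2-\eta}$ — but the case decomposition you propose does not deliver the lemma under its stated (weak) hypothesis on $\ell_{n,m}$. You split on whether $\mu\in C_n(\alpha;X,W)$ or not. If $\mu\in C_n$, then $E_n(\mu;X,W)\le 1/\alpha$ is useless, and all you know about the $Y$-sample is that $\mu$ lies at distance at least $\ell_{n,m}$ below $C_m(\alpha;Y,W)$; for this alone to force $E_m(\mu;Y,W)\ge \alpha^{-(1+\eta')}$ you would need $\ell_{n,m}\ge \eta'\log(1/\alpha)\big/\sum_t \tfrac{W_t(Y)}{1+W_t(Y)(b_Q-a_Q)}$, which is strictly stronger than the hypothesis of the lemma (the hypothesis subtracts the $\eta\big/\sum_t \tfrac{W_t(Y)}{1-W_t(Y)(b_Q-a_Q)}$ term and is close to $0$ when $\eta\approx\eta'$ and the weights are small — exactly the regime the paper needs for Theorem~\ref{th:typeIerror}). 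Conversely, if $\mu\notin C_n$, the bound $E_n(\mu;X,W)\ge 1/\alpha$ alone only gives probability $\alpha$.

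The missing idea is the choice of reference point and window: let $\widehat F$ be the midpoint of the gap between $\max C_n$ and $\min C_m$ and split on whether $\mu\in[\widehat F - L_X,\widehat F + L_Y]$. Inside the window, $\mu$ misses one interval (so one branch product exceeds $2/\alpha$) while being within $L-\ell_{n,m}$ of the other, so the other branch product is at least $2\alpha^{\eta}/\alpha$ provided $L-\ell_{n,m}\le \eta\log(1/\alpha)\big/\sum_t\tfrac{W_t}{1-W_t(b-a)}$; Ville applied to the \emph{product} of the two branch martingales (a mean-one martingale by independence of the samples — a union bound over the two samples, as you suggest, would give a sum of probabilities rather than the needed product) yields $\tfrac14\alpha^{2-\eta}$. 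Outside the window, $\mu$ is at distance at least $L+\ell_{n,m}$ from one interval, and $L+\ell_{n,m}\ge \eta'\log(1/\alpha)\big/\sum_t\tfrac{W_t}{1+W_t(b-a)}$ yields $\alpha^{1+\eta'}$ from a single sample. Solving these two constraints for $L_X,L_Y$ is precisely what produces $\ell_{n,m}$ as the stated \emph{difference} of the two terms. Without this balancing device your argument either requires a stronger hypothesis on $\ell_{n,m}$ or proves a weaker bound, so the gap you flag as ``bookkeeping'' is in fact the core of the proof.
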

The proof is given in Section~\ref{sec:proof_lem_elln}.
Ideally, we want to bound the probabilities of non-overlapping, so $\ell_n=0$, which means taking $\eta$ close to $\eta'$. Then, because the weights $W_t(X)$ and $W_t(Y)$ go to $0$ when $t$ goes to infinity, this means we want to take $\eta$ and $\eta'$ both close to $1/2$. There is a small adjustment needed, and this adjustment depends on an upper bound of the sequence $\{W_t\}_{t\ge1}$ and this leads us to a trade-off: small $W_t$ are needed to get a manageable probability of error and (somewhat) large $W_t$ are needed to reduce the width of confidence intervals.

\section{Bounds on the error probabilities of the overlap tests}
In this section we bound the probabilities of error of the overlap test. In analogy with two-sample tests, we call type I errors the probabilities to choose either $H_1^-$ or $H_1^+$ when $H_0$ is true, type II error the probability to choose $H_0$ when either $H_1^-$ or $H_1^+$ is true and finally type III errors, the probabilities to choose $H_1^-$ when $H_1^+$ is true or to choose $H_1^+$ when $H_1^-$ was true. 

Remark that by summing the two type I errors we get the type I error of the bilateral test that would reject when the confidence intervals are disjoint, and the type II error of the bilateral test is bounded by the type II error of the three hypotheses test.
\subsection{Bound on type I error}\label{sec:typeI}
Using the results from Section~\ref{sec:proba_overlap}, we have the following bound on the type I errors.
\begin{theorem}[Type I errors]\label{th:typeIerror}
Suppose $\E_P[X] = \E_Q[Y]$. Let $t_0 \ge 0$ and suppose $W_t(X)$ and $W_t(Y)$ are deterministic for $t\le t_0$. Let
$$C_{t_0} =\max\left(\frac{\sum_{t=1}^{t_0} \frac{W_t(X)}{1-W_t(X)(b_P-a_P)}}{\sum_{t=1}^{t_0} \frac{W_t(X)}{1+W_t(X)(b_P-a_P)}}, \frac{\sum_{t=1}^{t_0} \frac{W_t(Y)}{1-W_t(Y)(b_Q-a_Q)}}{\sum_{t=1}^{t_0} \frac{W_t(Y)}{1+W_t(Y)(b_Q-a_Q)}}\right).$$ 
then,
$$\P(\exists n,m \ge t_0 \mid C_n(\alpha;X,W) > C_m(\alpha;Y,W))\le  \alpha\left(\frac{\alpha C_{t_0}}{4}\right)^{\frac{1}{1+C_{t_0}}}+ \frac{1}{4}\alpha^{2}\left(\frac{4}{\alpha C_{t_0}}\right)^{\frac{C_{t_0}}{1+C_{t_0}}},$$
and 
$$\P(\exists n,m \ge t_0 \mid C_n(\alpha;X,W) < C_m(\alpha;Y,W))\le  \alpha\left(\frac{\alpha C_{t_0}}{4}\right)^{\frac{1}{1+C_{t_0}}}+ \frac{1}{4}\alpha^{2}\left(\frac{4}{\alpha C_{t_0}}\right)^{\frac{C_{t_0}}{1+C_{t_0}}}.$$
\end{theorem}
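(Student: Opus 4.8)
The idea is to deduce the bound directly from Lemma~\ref{lem:elln}, applied with a separation sequence $\ell_{n,m}$ that vanishes for all indices $n,m\ge t_0$, and then to optimize the resulting estimate over the free parameters $\eta,\eta'$. Concretely, fix $\eta'>0$, set $\eta=C_{t_0}\eta'$, and define $\ell_{n,m}=0$ when $n\ge t_0$ and $m\ge t_0$, and $\ell_{n,m}=+\infty$ otherwise. With this choice the event $\{\exists n,m\ge t_0:\ C_n(\alpha;X,W)>C_m(\alpha;Y,W)\}$ is contained in the event $\{\exists n,m\ge 1:\ C_n(\alpha;X,W)\ge C_m(\alpha;Y,W)+\ell_{n,m}\}$ appearing in the second inequality of Lemma~\ref{lem:elln}, so it suffices to check the hypothesis of that lemma for this $\ell_{n,m}$; the reversed case $C_n<C_m$ will be symmetric.

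Writing $u_t=\tfrac{W_t}{1+W_t(b_P-a_P)}$ and $v_t=\tfrac{W_t}{1-W_t(b_P-a_P)}$ (and analogously for $Y$ with $b_Q-a_Q$), the condition on $\ell_{n,m}$ for $n,m\ge t_0$ reduces, after clearing denominators, to the single requirement $\eta'/\eta\le \tfrac{\sum_{t=1}^n u_t(X)}{\sum_{t=1}^n v_t(X)}$ for all $n\ge t_0$, together with the same inequality for $Y$. Since $\eta'/\eta=1/C_{t_0}$ by our choice, it is enough to prove
$$\frac{\sum_{t=1}^n u_t(X)}{\sum_{t=1}^n v_t(X)}\ \ge\ \frac{1}{C_{t_0}}\qquad\text{for all }n\ge t_0,$$
and the analogue for $Y$. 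I would establish this using two features of the weight sequences to which the theorem is applied, namely those of~\eqref{eq:lambda_any_time}: $(i)$ on $\{1,\dots,t_0\}$ the weights are deterministic and non-increasing (the denominator $1+\sqrt{t\log(t+1)/(8c^2\log(2/\alpha))}$ is increasing in $t$), and $(ii)$ $W_t\le W_{t_0}$ for every $t\ge t_0$, which holds pathwise because the second branch of~\eqref{eq:lambda_any_time} is dominated by its own deterministic ``$c$-term'', itself decreasing in $t$. Since $u_t/v_t=\tfrac{1-W_t(b_P-a_P)}{1+W_t(b_P-a_P)}$ is non-decreasing in $t$ on $\{1,\dots,t_0\}$, property $(i)$ gives that $\tfrac{\sum_{t\le t_0}u_t}{\sum_{t\le t_0}v_t}$, being a $v$-weighted average of the numbers $\{u_t/v_t\}_{t\le t_0}$, is at most $u_{t_0}/v_{t_0}$, while property $(ii)$ gives $u_t/v_t\ge u_{t_0}/v_{t_0}\ge \tfrac{\sum_{t\le t_0}u_t}{\sum_{t\le t_0}v_t}$ for every $t>t_0$. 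Splitting $\sum_{t=1}^n=\sum_{t=1}^{t_0}+\sum_{t=t_0+1}^n$ and applying the mediant inequality $\tfrac{a+c}{b+d}\ge\min(\tfrac{a}{b},\tfrac{c}{d})$ twice then yields $\tfrac{\sum_{t\le n}u_t}{\sum_{t\le n}v_t}\ge \tfrac{\sum_{t\le t_0}u_t}{\sum_{t\le t_0}v_t}$ for all $n\ge t_0$; the remaining inequality $\tfrac{\sum_{t\le t_0}u_t}{\sum_{t\le t_0}v_t}\ge 1/C_{t_0}$ is just the definition of $C_{t_0}$. (The determinism of the weights up to $t_0$ is what makes $C_{t_0}$, and hence the admissible choice $\eta=C_{t_0}\eta'$, non-random; note also that $\eta=C_{t_0}\eta'$ is the smallest admissible value of $\eta$, which is optimal since $\alpha^{2-\eta}$ is increasing in $\eta$.)

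With the hypothesis verified, Lemma~\ref{lem:elln} gives $\P(\exists n,m\ge t_0:\ C_n(\alpha;X,W)>C_m(\alpha;Y,W))\le \alpha^{1+\eta'}+\tfrac14\alpha^{2-C_{t_0}\eta'}$ for every $\eta'>0$. It then remains to minimize $g(\eta')=\alpha^{1+\eta'}+\tfrac14\alpha^{2-C_{t_0}\eta'}$ over $\eta'>0$: the substitution $x=\alpha^{\eta'}$ turns this into the convex function $\alpha x+\tfrac14\alpha^2 x^{-C_{t_0}}$ of $x>0$, whose minimizer is $x=(C_{t_0}\alpha/4)^{1/(1+C_{t_0})}$, and plugging this back in reproduces exactly the two terms in the statement. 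The reversed inequality $C_n(\alpha;X,W)<C_m(\alpha;Y,W)$ is handled identically, using the first inequality of Lemma~\ref{lem:elln} instead of the second.

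The step I expect to be the real obstacle is the uniform-in-$n$ ratio bound $\tfrac{\sum_{t\le n}u_t}{\sum_{t\le n}v_t}\ge 1/C_{t_0}$: it is precisely what forces both the burn-in period and the monotone shape of the weights~\eqref{eq:lambda_any_time}, and the argument has to be carried out pathwise, since the weights are data-dependent for $t>t_0$, rather than merely in expectation. Everything after that is the elementary one-variable optimization above.
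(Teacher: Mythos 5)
Your proposal is correct and follows essentially the same route as the paper: apply Lemma~\ref{lem:elln} with $\ell_{n,m}=0$, reduce the admissibility constraint to the ratio bound $\sum_{t\le n}u_t/\sum_{t\le n}v_t\ge 1/C_{t_0}$ via the monotonicity of the weights and a mediant/weighted-average argument (the paper uses the equivalent $\sum\alpha_i/\sum\beta_i\le\max_i\alpha_i/\beta_i$), take $\eta=C_{t_0}\eta'$, and optimize $\alpha^{1+\eta'}+\tfrac14\alpha^{2-C_{t_0}\eta'}$ in $\eta'$. If anything, you are slightly more careful than the paper in making the pathwise domination $W_t\le W_{t_0}$ for $t>t_0$ explicit and in handling the restriction to $n,m\ge t_0$ inside the lemma's ``$\exists n,m\ge1$'' event.
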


Remark that Theorem~\ref{th:typeIerror} can be used to control the type I errors of both anytime and fixed-time test: by considering the test only until $t_0$, we revert to fixed-time test.
Remark that if $W_t(X)$ and $W_t(Y)$ are deterministic sequences that goes to $0$ as $t$ goes to infinity, we have that $C_{t_0}$ goes to $1$ and $\P(C_n(\alpha;X,W) >  C_m(\alpha;Y,W))$ goes to $\alpha^{3/2}$. On the other hand, when $C_{t_0}$ going to infinity, we get a probability of error of $\alpha$.

The proof is in Section~\ref{sec:proof_typeI_simple}. Remark that this theorem shows the importance of choosing $c$ not too large, although a constant $c$ that is too small would mean larger confidence intervals.

\subsection{Bound on type II error}\label{sec:power}
Bounding the type II error for the anytime test in Algorithm~\ref{algo:anytime} is a simple consequence of $C_n(\alpha:X,W)$ and $C_n(\alpha:X,W)$ being confidence intervals and this leads to the following theorem.
\begin{theorem}[Anytime bound on type II error]\label{th:typeII_anytime}
Let $\Delta>0$. Suppose $X_1,X_2,\dots$ are i.i.d. with law $P$ and $Y_1,Y_2,\dots$ are i.i.d. with law $Q$ both with bounded support and satisfy $\Delta\le|\E_P[X]-\E_Q[Y]|$. Then, the test in Algorithm~\ref{algo:anytime} with parameters $\Delta$ and $\alpha$ has the following type II error:
$$\P\left(\text{Accept }H_0\right)\le 2\alpha.$$
\end{theorem}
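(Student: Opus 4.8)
The plan is to show that if the algorithm ever accepts $H_0$, then the true means $\mu_X := \E_P[X]$ and $\mu_Y := \E_Q[Y]$ must lie in a common small interval, and this happens only if both $\mu_X$ and $\mu_Y$ fall in their respective confidence intervals at that stopping time — an event whose complement has probability at most $2\alpha$ by a union bound over the two anytime confidence sequences. The key observation is that acceptance of $H_0$ in step (d) of Algorithm~\ref{algo:anytime} requires simultaneously that $C_n(\alpha;X,W)\cap C_m(\alpha;Y,W)\neq\emptyset$ and that $L(C_n(\alpha;X,W)\cup C_m(\alpha;Y,W))\le \Delta$. Under the hypothesis $|\mu_X-\mu_Y|\ge\Delta$, I will argue that these two conditions force at least one of $\mu_X\notin C_n(\alpha;X,W)$ or $\mu_Y\notin C_m(\alpha;Y,W)$.

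First I would spell out that argument. Suppose, for contradiction, that at the (random) time the algorithm accepts $H_0$ we had both $\mu_X\in C_n(\alpha;X,W)$ and $\mu_Y\in C_m(\alpha;Y,W)$. Since the two intervals intersect, their union $U := C_n(\alpha;X,W)\cup C_m(\alpha;Y,W)$ is itself an interval, and it contains both $\mu_X$ and $\mu_Y$. Hence $|\mu_X-\mu_Y|\le L(U)\le \Delta$ by the acceptance condition in step (d). This contradicts $|\mu_X-\mu_Y|\ge\Delta$ unless in fact $|\mu_X-\mu_Y|=\Delta$, and a brief remark handles that boundary case (either the inequality in $H_1^{\pm}$ is strict so $\Delta<|\mu_X-\mu_Y|$, or one absorbs it since the event still has probability zero of causing an issue, or one simply notes the containment then forces equality throughout and the means sit at the two endpoints of $U$; in any case the stated bound $2\alpha$ is unaffected). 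Therefore the event $\{\text{Accept }H_0\}$ is contained in $\{\exists n\ge 1: \mu_X\notin C_n(\alpha;X,W)\}\cup\{\exists m\ge 1: \mu_Y\notin C_m(\alpha;Y,W)\}$.

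Then I would invoke the anytime validity of the confidence sequences established earlier in the excerpt: because $E_n(\mu_X;X,W)$ is a nonnegative supermartingale, Ville's inequality gives $\P(\exists n\ge1: \mu_X\notin C_n(\alpha;X,W)) = \P(\exists n\ge1: E_n(\mu_X;X,W)>1/\alpha)\le\alpha$, and symmetrically for the $Y$-sequence. Here one should note that the adaptivity in line~(a) of the algorithm — the experimenter may choose, based on the past, whether to draw the next sample from $P$ or from $Q$ — does not break the supermartingale property of either process, since within each stream the samples are still i.i.d.\ and the weights $W_t$ remain predictable; this is exactly the requirement flagged in the paper that ``everything observed must be included in the test.'' A union bound over the two events then yields $\P(\text{Accept }H_0)\le \alpha+\alpha = 2\alpha$.

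The main obstacle, such as it is, is making the adaptivity argument rigorous: one must check that interleaving draws from the two independent streams according to a predictable rule preserves the supermartingale structure of $E_n(\mu_X;X,W)$ with respect to the joint filtration. This is standard (optional skipping / Doob's optional sampling for the embedded index, or equivalently viewing the process as a supermartingale in the joint filtration that is constant on rounds where the other stream is sampled), but it is the one place where a careless argument could fail, so I would state it explicitly rather than gloss over it. Everything else is a short deterministic geometry argument plus Ville's inequality, both already available from the earlier parts of the paper.
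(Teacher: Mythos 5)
Your proposal is correct and follows essentially the same route as the paper's own proof: acceptance of $H_0$ forces the union of the two intervals to have length at most $\Delta$ while containing at most one of the two means, so the acceptance event is contained in a miscoverage event of probability at most $2\alpha$ by anytime validity and a union bound. Your additional remarks on the boundary case $|\E_P[X]-\E_Q[Y]|=\Delta$ and on why predictable interleaving of the two streams preserves the supermartingale property are careful elaborations of points the paper leaves implicit, not a different argument.
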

\begin{proof}
On the event ``Accept $H_0$'', we have $L(C_n(\alpha;X)\cup C_m(\alpha;Y))\le \Delta$ and $C_n(\alpha;X)\cap C_m(\alpha;Y) \neq \emptyset$. Hence, necessarily we have that either $\E_P[X] \notin C_n(\alpha;X)\cup C_m(\alpha;Y)$ or $\E_Q[Y] \notin C_n(\alpha;X)\cup C_m(\alpha;Y)$. Hence, either $\E_P[X] \notin C_n(\alpha;X)$ or $\E_Q[Y] \notin C_m(\alpha;Y)$, and this can happen with probability at most $2\alpha$.
\end{proof}

In the case of Fixed-time test, we need to bound the length of the confidence interval to get a bound on the type II error. From Lemma~\ref{lem:length}, we have
\begin{equation}\label{eq:length_hoeffding}
L(C_n(\alpha;X,w^f))\le L_H(P):=2\frac{nw - \sqrt{n^2w^2 - 2nv\left(\frac{nv}{8}+\log(2/\alpha)\right)}}{nv} .
\end{equation}
Remark that this bound is the reason for choosing deterministic weights for Fixed-time tests. Indeed, if we had chosen Bernstein-type weights, we could then bound $L(C_n(\alpha;X,w^f))$ with high probability results but to do this we would need to know the variances of $P$ and $Q$ which is not the case. Then, from Equation~\eqref{eq:length_hoeffding}, we  derive a bound on the type II error of our test.
\begin{theorem}[Fixed time bound on type II error]\label{th:typeII_hoeffding}
Suppose $X_1,\dots,X_n$ are i.i.d. with law $P$ and $Y_1,\dots,Y_n$ are i.i.d. with law $Q$, with 
$$\Delta:=\E_Q[Y]-\E_P[X]> L_H(P)+L_H(Q)$$
Then, 
\begin{align*}
\P&\left(C_n(\alpha;X, w^H)\cap C_m(\alpha:Y, w^H) \neq \emptyset\right)\\
&\le \alpha\left(e^{-(\Delta-L_H(P)-L_H(Q)) \sum_{t=1}^n \frac{w_t^f(P)}{1+w_t^f(P)(b_P-a_P)}}+ e^{-(\Delta-L_H(P)-L_H(Q)) \sum_{t=1}^n \frac{w_t^f(Q)}{1+w_t^f(Q)(b_Q-a_Q)}}\right).
\end{align*}
\end{theorem}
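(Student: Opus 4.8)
The plan is to reduce the event $\{C_n(\alpha;X,w^f)\cap C_m(\alpha;Y,w^f)\neq\emptyset\}$ to a statement about the locations of the two confidence intervals relative to the true means, and then invoke Lemma~\ref{lem:elln} with a suitable choice of $\ell_{n,m}$ driven by the deterministic length bound $L_H$. Concretely, write $\mu_P=\E_P[X]$ and $\mu_Q=\E_Q[Y]$ and assume WLOG the ordering in the statement, so $\mu_Q-\mu_P=\Delta>L_H(P)+L_H(Q)$. Since $C_n(\alpha;X)$ contains $\mu_P$ with high probability and has length at most $L_H(P)$, on that event $C_n(\alpha;X)\subseteq[\mu_P-L_H(P),\mu_P+L_H(P)]$; similarly $C_m(\alpha;Y)\subseteq[\mu_Q-L_H(Q),\mu_Q+L_H(Q)]$ with high probability. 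If both inclusions hold, the gap between the right end of $C_n(\alpha;X)$ and the left end of $C_m(\alpha;Y)$ is at least $\mu_Q-L_H(Q)-(\mu_P+L_H(P))=\Delta-L_H(P)-L_H(Q)>0$, so the intervals are disjoint. Hence the intersection event forces at least one of ``$\mu_P\notin C_n(\alpha;X)$'' or ``$\mu_Q\notin C_m(\alpha;Y)$'' — but that only gives the crude $2\alpha$ bound, so this is not quite the right decomposition; instead I want to feed the length bound directly into Lemma~\ref{lem:elln}.

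The cleaner route: the intersection event is exactly $\neg\{C_n>C_m\}\cap\neg\{C_n<C_m\}$, i.e. $C_n(\alpha;X)\not> C_m(\alpha;Y)$ and $C_n(\alpha;X)\not< C_m(\alpha;Y)$. I would bound $\P(C_n(\alpha;X)\cap C_m(\alpha;Y)\neq\emptyset)$ by $\P(C_n(\alpha;X)\not< C_m(\alpha;Y))$ — using the ordering $\mu_P<\mu_Q$ to discard the other term — which says the interval for $X$ is not strictly below the interval for $Y$, i.e. $\max C_n(\alpha;X)\ge \min C_m(\alpha;Y)$. Now use the length bounds: on the high-probability event where both intervals have length $\le L_H(\cdot)$, this is implied by $\min C_n(\alpha;X) + L_H(P)\ge \max C_m(\alpha;Y) - L_H(Q)$, which rearranges to $C_n(\alpha;X)\ge C_m(\alpha;Y) - (L_H(P)+L_H(Q))$ up to translating endpoints — more precisely it gives $\min C_n(\alpha;X)\ge \min C_m(\alpha;Y) - (L_H(P)+L_H(Q))$. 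Comparing with the target: we are in the regime $\mu_P = \mu_Q - \Delta$, so after recentering both interval families around the common structure of Lemma~\ref{lem:elln} (which is stated under $\E_P[X]=\E_Q[Y]$), the ``effective separation'' we need the intervals NOT to achieve is $\Delta - L_H(P) - L_H(Q)$. Thus I apply Lemma~\ref{lem:elln} with $\ell_{n,m}$ chosen to realize the bound $\Delta-L_H(P)-L_H(Q)$, which is legitimate because the deterministic weights $w_t^f$ make the sums $\sum_t \tfrac{w_t^f}{1\pm w_t^f(b-a)}$ deterministic and the required lower bound on $\ell_{n,m}$ in the hypothesis of Lemma~\ref{lem:elln} reduces to a condition relating $\Delta-L_H(P)-L_H(Q)$, $\eta$, $\eta'$, and these sums.

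For the final exponential form I would specialize the parameters in Lemma~\ref{lem:elln}: take $\eta'\to 0$ and $\eta\to 1$ so the right-hand side $\alpha^{\eta'+1}+\tfrac14\alpha^{2-\eta}$ collapses toward $\alpha$, and read off the constraint this imposes. With $\eta'+1\to 1$ the first sum term $\tfrac{\eta'}{\sum w_t^f/(1+w_t^f(b-a))}$ must be chosen so that, together with the $-\eta/\sum w_t^f/(1-w_t^f(b-a))$ term, the maximum in the hypothesis equals $\tfrac{2}{\log(1/\alpha)}(\Delta-L_H(P)-L_H(Q))$; matching the $\eta'$ coefficient then produces exactly the exponential factor $e^{-(\Delta-L_H(P)-L_H(Q))\sum_t w_t^f/(1+w_t^f(b-a))}$ for each of $P$ and $Q$, and a union bound over the two ways the intersection can occur gives the sum of the two exponentials with the prefactor $\alpha$. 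The main obstacle I anticipate is the bookkeeping in this last matching step: Lemma~\ref{lem:elln} is phrased with a fixed $\ell_{n,m}$ and parameters $\eta,\eta'$ that trade off against each other, whereas here $\ell_{n,m}$ is effectively forced by the deterministic quantity $\Delta-L_H(P)-L_H(Q)$, so I need to verify that a valid $(\eta,\eta')$ pair exists making the hypothesis of Lemma~\ref{lem:elln} hold with that $\ell_{n,m}$ and simultaneously driving the probability bound to the claimed $\alpha(e^{-\cdots}+e^{-\cdots})$ — in particular checking that the second term $\tfrac14\alpha^{2-\eta}$ is absorbed or negligible in the stated regime, and that the asymmetry between $1+w_t^f(b-a)$ and $1-w_t^f(b-a)$ in the two sums does not break the clean exponential form. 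Combining the disjointness reduction, the length bound from Lemma~\ref{lem:length}/Equation~\eqref{eq:length_hoeffding}, and this calibrated application of Lemma~\ref{lem:elln} yields the theorem.
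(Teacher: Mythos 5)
There is a genuine gap, and it is located exactly where you flagged it: the ``calibrated application of Lemma~\ref{lem:elln}'' cannot deliver the stated bound. First, Lemma~\ref{lem:elln} is proved under $\E_P[X]=\E_Q[Y]$; your recentering $Y_t\mapsto Y_t-\Delta$ is plausible but unverified (you would need to check that the weights and the e-process shift consistently). More fatally, the conclusion of Lemma~\ref{lem:elln} is $\alpha^{\eta'+1}+\tfrac14\alpha^{2-\eta}$ for a \emph{single} pair $(\eta,\eta')$, whereas the theorem's bound is a sum of two distinct exponentials (one with the $P$-sum, one with the $Q$-sum) and no residual power of $\alpha$. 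Your suggested limit $\eta'\to0$, $\eta\to1$ gives $\tfrac54\alpha$, which does not decay in $\Delta$ or $n$ at all; the alternative of choosing $\eta'$ so that $\alpha^{\eta'}$ matches the exponential factor forces you to take the smaller of the two candidate $\eta'$ values (losing one of the two terms) and still leaves the irreducible additive term $\tfrac14\alpha^{2-\eta}$, which is absent from the theorem. So the ``bookkeeping'' you hoped would work out in fact produces a strictly weaker statement.

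The idea you discarded in your first paragraph is the right one — you just stopped one step short of quantifying it. The deterministic length bound \eqref{eq:length_hoeffding} shows that if the intervals intersect then one of the true means is not merely outside its own interval, but outside it by a margin of at least $\Delta-L_H(P)-L_H(Q)$: e.g.\ if $\E_P[X]\in C_n(\alpha;X,w^f)$ then $\E_Q[Y]$ lies at distance at least $\Delta-L_H(P)-L_H(Q)$ beyond the endpoint of $C_m(\alpha;Y,w^f)$ where $E_m$ equals $1/\alpha$. Lemma~\ref{lem:bound_deriv} (the lower bound $\sum_t w_t^f/(1+w_t^f(b-a))$ on $|\tfrac{\d}{\d z}\log E_m(z;\cdot)|$) then converts that margin into $\log E_m(\E_Q[Y];Y)\ge\log(1/\alpha)+(\Delta-L_H(P)-L_H(Q))\sum_t \tfrac{w_t^f(Q)}{1+w_t^f(Q)(b_Q-a_Q)}$, and Markov's inequality applied to the e-variable $E_m(\E_Q[Y];Y)$ at this \emph{elevated} threshold gives $\alpha\,e^{-(\Delta-L_H(P)-L_H(Q))\sum_t\cdots}$ rather than the crude $\alpha$. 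A union bound over the two symmetric cases ($\E_P[X]\in C_n$ versus $\E_Q[Y]\in C_m$) yields exactly the claimed sum of two exponentials. This direct route bypasses Lemma~\ref{lem:elln} entirely and is how the paper proceeds.
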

The proof is given in Section~\ref{sec:proof_th_typeII_hoeffding}. Remark that Theorem~\ref{th:typeII_hoeffding} is often inapplicable if $n$ and $m$ are not large enough so that $\E_Q[Y]-\E_P[X]\le  L_H(P)+L_H(Q)$. In practice this means we have no type II guarantee for the fixed-time test of overlap when $n$ or $m$ is too small. This is one of the main limitation of our method. 

\subsection{Bound on type III error}\label{sec:typeIII}
Finally, we bound the type III error. This type of error does not happen in a bilateral test and is often assumed too small to care in practice.
\begin{theorem}[Bound on type III errors]\label{th:typeIII}
Suppose $\E_P[X] > \E_Q[Y] + \Delta$ and for $t\le t_0$, suppose that $W_t(X)$ and $W_t(Y)$ are deterministic. Then 
$$\P\left(\exists n, m \ge t_0 \mid C_n(\alpha;X) < C_m(\alpha;Y)\right)\le \alpha^2 + \alpha\left(e^{-\Delta\sum_{t=1}^{t_0} \frac{W_t(Y)}{1+W_t(Y)(b_Q-a_Q)}}+e^{-\Delta\sum_{t=1}^{t_0} \frac{W_t(X)}{1+W_t(X)(b_P-a_P)}}\right).$$
And if $\E_P[X] < \E_Q[Y] - \Delta$, we have
$$\P\left(\exists n, m \ge t_0 \mid C_n(\alpha;X) > C_m(\alpha;Y)\right)\le \alpha^2 + \alpha\left(e^{-\Delta\sum_{t=1}^{t_0} \frac{W_t(Y)}{1+W_t(Y)(b_Q-a_Q)}}+e^{-\Delta\sum_{t=1}^{t_0} \frac{W_t(X)}{1+W_t(X)(b_P-a_P)}}\right).$$
\end{theorem}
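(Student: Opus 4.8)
The plan is to split the event $\{\exists\, n,m\ge t_0 : C_n(\alpha;X) < C_m(\alpha;Y)\}$ according to whether the true means lie in their respective confidence intervals, bounding the ``both outside'' piece by independence and the remaining pieces by a drift‑corrected martingale together with Ville's inequality. The first observation is geometric: on a crossing event one has $\max C_n(\alpha;X) < \min C_m(\alpha;Y)$, so if both $\E_P[X]\le \max C_n(\alpha;X)$ and $\E_Q[Y]\ge\min C_m(\alpha;Y)$ held we would get $\E_P[X]<\E_Q[Y]$, contradicting $H_1^+$. Hence the crossing event is covered by the three events: (i) $\{\E_P[X]\notin C_n(\alpha;X)\}\cap\{\E_Q[Y]\notin C_m(\alpha;Y)\}$; (ii) crossing with $\E_Q[Y]\in C_m(\alpha;Y)$; (iii) crossing with $\E_P[X]\in C_n(\alpha;X)$.

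For (i), since the conditions on $n$ and on $m$ decouple, the event equals $\{\exists n\ge t_0: \E_P[X]\notin C_n(\alpha;X)\}\cap\{\exists m\ge t_0: \E_Q[Y]\notin C_m(\alpha;Y)\}$; as $\{X_t\}$ and $\{Y_t\}$ are independent and $E_n(\E_P[X];X)$, $E_m(\E_Q[Y];Y)$ are nonnegative supermartingales, Ville's inequality applied to each plus independence gives probability at most $\alpha\cdot\alpha=\alpha^2$. For (ii), $\E_Q[Y]\in C_m(\alpha;Y)$ gives $\min C_m(\alpha;Y)\le\E_Q[Y]$, which together with crossing and $\E_P[X]>\E_Q[Y]+\Delta$ forces $z:=\E_P[X]-\Delta$ strictly to the right of $C_n(\alpha;X)$, i.e.\ $E_n(z;X)>1/\alpha$. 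Using convexity of $z\mapsto E_n(z;X)$ (Lemma~\ref{lem:convex}) and the fact that its two constituent products are respectively decreasing and increasing in $z$, I would argue that strictly to the right of $C_n(\alpha;X)$ the active branch is $\prod_{t=1}^n(1+W_t(X)(z-X_t))$, so $\prod_{t=1}^n(1+W_t(X)(z-X_t))>2/\alpha$. Writing $1+W_t(X)(z-X_t) = \big(1+W_t(X)(\E_P[X]-X_t)\big) - W_t(X)\Delta$, bounding $1+W_t(X)(\E_P[X]-X_t)\le 1+W_t(X)(b_P-a_P)$, and using $1-u\le e^{-u}$, each factor is at most $\big(1+W_t(X)(\E_P[X]-X_t)\big)\exp\!\left(-\frac{\Delta W_t(X)}{1+W_t(X)(b_P-a_P)}\right)$. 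Taking the product, and using $n\ge t_0$ with the determinism of the weights for $t\le t_0$ (so the exponent is an honest constant), this gives $F_n := \prod_{t=1}^n\big(1+W_t(X)(\E_P[X]-X_t)\big) > \frac{2}{\alpha}\exp\!\left(\Delta\sum_{t=1}^{t_0}\frac{W_t(X)}{1+W_t(X)(b_P-a_P)}\right)$. Since $F_n$ is a nonnegative martingale with $\E[F_n]=1$, Ville's inequality bounds the probability of (ii) by $\frac{\alpha}{2}\exp\!\left(-\Delta\sum_{t=1}^{t_0}\frac{W_t(X)}{1+W_t(X)(b_P-a_P)}\right)$.

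Case (iii) is the mirror image with $(X,P,n)$ and $(Y,Q,m)$ exchanged: $\E_P[X]\in C_n(\alpha;X)$ forces $z':=\E_Q[Y]+\Delta$ strictly to the left of $C_m(\alpha;Y)$, the active branch there (again by convexity and monotonicity) is $\prod_{t=1}^m(1+W_t(Y)(Y_t-z'))$, and the same drift correction followed by Ville's inequality bounds the probability of (iii) by $\frac{\alpha}{2}\exp\!\left(-\Delta\sum_{t=1}^{t_0}\frac{W_t(Y)}{1+W_t(Y)(b_Q-a_Q)}\right)$. A union bound over (i)--(iii) then yields the first inequality of the theorem (with some slack in the constants), and the second follows by exchanging the roles of $X$ and $Y$ throughout.

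The step I expect to be the main obstacle is the branch identification: one must establish carefully, from the convexity of $E_n(\cdot;X)$ and the monotonicity of its two products, that immediately outside $C_n(\alpha;X)$ on the right it is the increasing product $\prod(1+W_t(X)(z-X_t))$ that exceeds $2/\alpha$, since applying the drift correction to the other product $\prod(1+W_t(X)(X_t-z))$ — which has strictly positive drift and is a submartingale — would lead nowhere. A smaller point to verify is that in the sequential setting of Algorithm~\ref{algo:anytime} the adaptive choice of which population to sample at each step does not interfere with these arguments: $F_n$ and its $Y$‑analogue remain martingales in their own sample counts, and the two sample streams remain independent, so all applications of Ville's inequality and of independence go through.
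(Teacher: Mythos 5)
Your proposal is correct and follows essentially the same route as the paper: the identical decomposition into the three events (both means outside their intervals, bounded by $\alpha^2$ via independence; and the two one-sided events where the relevant e-process must exceed $1/\alpha$ by an extra drift of $\Delta$ accumulated over the deterministic burn-in weights, controlled by Ville's inequality). The only difference is cosmetic: you obtain the drift factor $e^{-\Delta\sum_{t\le t_0} W_t/(1+W_t(b-a))}$ by bounding each product factor multiplicatively via $1-u\le e^{-u}$, whereas the paper invokes its Lemma~\ref{lem:bound_deriv} on the derivative of $\log E_n$; both give the same exponent.
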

The proof is given in Section~\ref{sec:proof_th_typeIII}. In general, the type III error should be small as it is of order $e^{-\Delta \Omega(\sqrt{t_0})}$ for $t_0$ big if $w_t$ decrease slower than $1/\sqrt{t}$.

\section{Examples of application}\label{sec:xp}
In this section we first apply our tests to a simulated dataset, and then we illustrate their usage in Machine Learning to compare the performances of Reinforcement Learning and Bandit algorithms. For each test, a constant $c$ must be tuned. The larger the constant $c$, the more we think that the distribution is in fact well concentrated. In the case of Fixed-time test, we should have $c\in [0,1]$ while in the case of Anytime test, $c$ plays the role of a scale parameter for the distribution and should be larger than $1$. If by chance we have a good guess on the standard deviation $\sigma$ of the distribution, we can use $c = 2/\sigma$ as parameter. Remark that whatever the choice of constant, the intervals are still valid confidence intervals and the theory holds true, the impact of $c$ is on the width of the resulting confidence interval and on the resulting error bounds. 

\subsection{Simulated dataset}
We consider various settings in which we draw two samples from each law and apply our tests. For each setting, we report the probabilities of errors in Figures~\ref{fig:results_anytime} and \ref{fig:results_fixed_time}. We consider the following settings.
\textbf{Ber equal}: $P$ and $Q$ are both $\mathrm{Benoulli}(1/2)$.
\textbf{Ber lower}: $P = \mathrm{Benoulli}(1/2)$ and $Q = \mathrm{Benoulli}(1/2 + 0.1)$.
\textbf{Unif vs Ber equal}: $P = \mathrm{Benoulli}(0.6)$ and $Q = \mathrm{Uniform}(0,1.2)$.
\textbf{Unif vs Ber lower}: $P = \mathrm{Benoulli}(0.6)$ and $Q = \mathrm{Uniform}(0,1.4)$. 
\textbf{Beta equal}: $P = \mathrm{Beta}(10,30)$ and $Q = \mathrm{Beta}(1,3)$. 
\textbf{Beta lower}: $P = \mathrm{Beta}(10,30)$ and $Q = \mathrm{Beta}(10,15)$. These settings are meant to represent both high-variance regime (Bernoulli distributions), low variance regime (Beta distributions), and various distributions shape and supports.

We report the results of the anytime test with parameters $\alpha=0.1$, $c=1$, $t_0=338$\footnote{We used the heuristic $t_0=\log(1/\alpha)/\mathrm{kl}(1/2,1/2+\Delta)$, using the Bernoulli kl as a heuristic of the complexity of estimating the mean up to $\Delta$ precision.} in Figure~\ref{fig:results_anytime}, and the Fixed-time test for parameters $\alpha=0.1$, $c=1$ and sample size $1000$ for each $P$ and $Q$ (hence $2000$ samples in total in comparison to the last row of Figure~\ref{fig:results_anytime}) in Figure~\ref{fig:results_fixed_time}. Both experiments were run $5000$ times to get the results presented. We included the theoretical bounds\footnote{The theoretical bounds could in principle be different between settings because of different supports but here and in the rest of this section, in practice, the bounds are numerically very close.} on error in the title of the figures.

\begin{figure}[h]
\footnotesize
\begin{center}
\begin{tabular}{|l||r|r|r|r|r|r|}
\toprule
 & Ber equal & Ber lower & Unif vs Ber equal & Unif vs Ber lower & Beta equal & Beta lower \\
\midrule
lower & 0.0014 & 0.9986 & 0.0010 & 0.9992 & 0.0000 & 1.0000 \\
equal & 0.9970 & 0.0014 & 0.9980 & 0.0008 & 1.0000 & 0.0000 \\
larger & 0.0016 & 0.0000 & 0.0010 & 0.0000 & 0.0000 & 0.0000 \\
\hline
n samples & 3089 (1084) & 2112 (1275) & 2204 (545) & 1744 (804) & 856 (95) & 694 (16) \\
\bottomrule
\end{tabular}
\caption{Probabilities of obtaining each decision and mean sample size at decision (with std in parentheses) for the Anytime overlap test. Theoretical bounds on errors are: $\mathrm{type\ I} \le 0.052$, $\mathrm{type\ II}\le 0.2$, $\mathrm{type\ III}\le 0.014$ }\label{fig:results_anytime}
\end{center}
\end{figure}

\begin{figure}[h]
\footnotesize

\begin{center}
\begin{tabular}{|l||r|r|r|r|r|r|}
\toprule
 & Ber equal & Ber lower & Unif vs Ber equal & Unif vs Ber lower & Beta equal & Beta lower \\
\midrule
lower & 0.0002 & 0.8512 & 0.0002 & 0.8770 & 0.0000 & 1.0000 \\
equal & 0.9990 & 0.1488 & 0.9998 & 0.1230 & 1.0000 & 0.0000 \\
larger & 0.0008 & 0.0000 & 0.0000 & 0.0000 & 0.0000 & 0.0000 \\
\bottomrule
\end{tabular}
\caption{Probabilities of obtaining each decision. Fixed time, with 1000 samples each. Theoretical bounds on errors are: $\mathrm{type\ I} \le 0.04$, $\mathrm{type\ II}\le 1$, $\mathrm{type\ III}\le 0.01$}\label{fig:results_fixed_time}
\end{center}
\end{figure}

We see on Figure~\ref{fig:results_anytime} and Figure~\ref{fig:results_fixed_time} that the theoretical bounds on error are typically very conservative and the obtained error is much lower than the bound, especially for the Fixed-time test for which the type II error is not informative due to sample sizes that are too low. Remark that in both cases, the low variance cases (Beta distributions) are easier and need less sample to get a small error, this shows that our test are adaptive to the specific shape of the distribution. 

\subsection{Usage in comparison of Sequential Learning Algorithms}
In this section we illustrate the use of our tests for comparison of learning algorithms and we represent the results in Figures~\ref{fig:ale} and \ref{fig:bandits}. 
The first experiment (Figure~\ref{fig:ale}) reuses data from \cite{agarwal2021deep} which compares the performances of deep reinforcement learning algorithms learning to play Atari games. The performance metric is the optimality gap which represents the difference between a base score of $1$, representing the performances of a Human expert, with the score of the agent. The optimality gap is bounded between $0$ and $1$, and it is highly nonparametric because different games give different score distributions which likely results in a multi-modal distribution. For a reader wishing to compare, say, REM and DreamerV2, Figure~\ref{fig:ale} gives right away the conclusion that REM performs worse than DreamerV2. There are 275 measures of optimality gap for each algorithm, except M-IQN which got 165 and DreamerV2 which got 605. For this experiment, we used the parameters $\alpha=0.1$, and $c=4$.

\begin{figure}[h]
  \begin{center}
\subfigure[Comparison of Deep RL algorithms on Atari games. Theoretical bounds on errors are: $\mathrm{type\ I} \le 0.073$ (resp. $0.079$ for tests with M-IQN), $\mathrm{type\ II}\le 1$, $\mathrm{type\ III}\le 0.01$.\label{fig:ale}]{
    \includegraphics[width=0.55\textwidth]{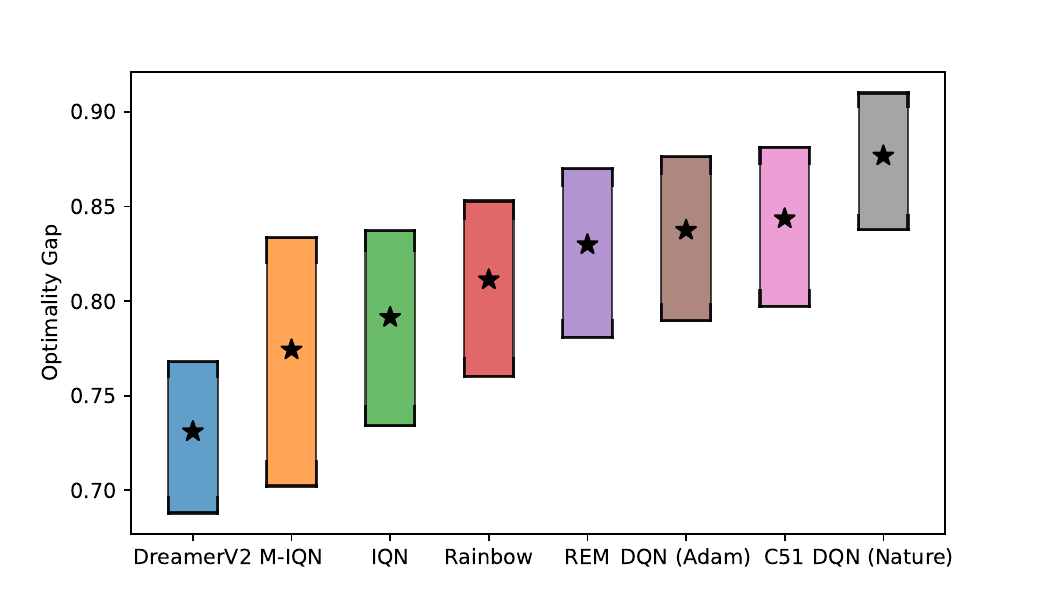}
    }
\subfigure[Comparison of bandit algorithms on a regret minimization problem with Bernoulli arms. Theoretical bounds on errors are: $\mathrm{type\ I} \le 0.088$, $\mathrm{type\ II}\le 0.2$, $\mathrm{type\ III}\le 0.4$.\label{fig:bandits}]{
\includegraphics[width=0.39\textwidth]{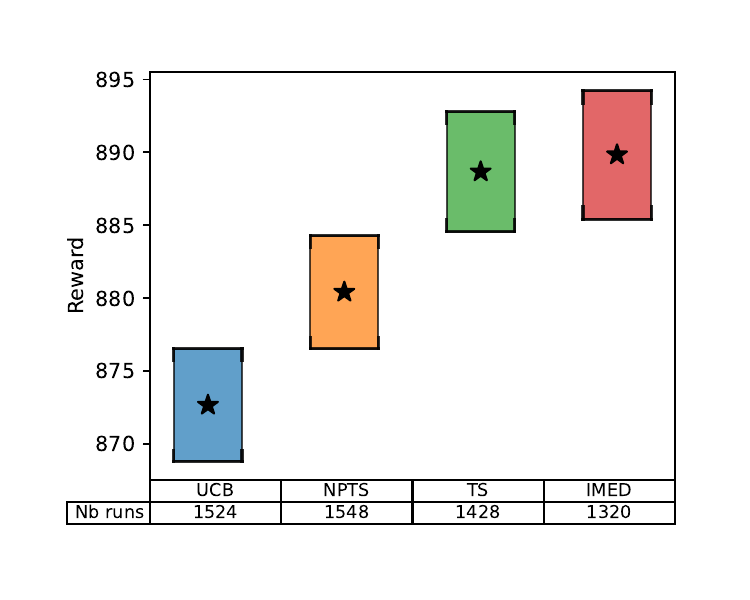}}
  \end{center}
  \caption{Confidence intervals for the comparison of Deep RL (Figure~\ref{fig:ale}) and Bandit (Figure~\ref{fig:bandits}) algorithms. The stars represent the empirical means and the boxes are the confidence intervals.}
\end{figure}

In Figure~\ref{fig:bandits} we represent the confidence intervals on the cumulative reward of several algorithms on a bandit problem with Bernoulli distribution of arms. The algorithms' the goal is to maximize the cumulative reward. The means of the arms are $0.6,0.85,0.9$ and the horizon is $1000$ (which can be considered a small sample regime for bandit problems). We use the anytime test defined in Algorithm~\ref{algo:anytime} to decide when to stop on each pairwise comparison, the confidence intervals and tests are comparing the cumulative reward of the algorithm all along the $1000$ steps of an algorithm's run. We do $16$ runs at a time getting the data in batch  in order to speed-up computation. At each step of the data collection process, we get more performance measurements for the algorithm whose confidence interval is the largest and for which the pairwise tests with other algorithms are not already decided. We used Algorithm~\ref{algo:anytime} with parameters $\alpha = 0.1$ $c=10$, $t_0 = 500$ and $\Delta=10$ (we expect the rewards to be well concentrated hence the value of $c$) to get the plot and the associated error bounds. Due to the nature of our tests, we stop gathering data (which here mean stop new training runs) when the confidence intervals are either disjoint or small enough to be able to accept that two means are equal (up to a precision $\Delta$). Remark that our test is not a multiple test, hence we have no control on the error when doing all the comparisons at the same time, we only have control on the error of one pairwise comparison.

\section{Conclusion}

In this article we provide a way to compare visually the means of the distributions of two samples using how much do their confidence intervals overlap, both in a sequential and non-sequential setting. We show that the tests we propose have controlled probabilities of error, and we show how our proposed method of comparison can be used in the context of comparing Reinforcement Learning and Bandit algorithms. In the statistics community it is well known that doing such a visual confidence interval is not optimal and that one should rather do a two-sample test to have a more powerful test, however we believe that our method will be of interest to practitioner and may be used informally in studies in which presentation and communication is important.

There are still limitations to our methods, in particular in the bound on type II error that seems very conservative for the Fixed-time test. Improving type II error guarantees is kept for future work.

\acks{I want to thank the Scool team for the wonderful working environment that allowed me to write this nice article.
}

\bibliography{biblio}
\newpage
\appendix
\section{Proofs of Theorems}

\subsection{Proof of Theorem~\ref{th:typeIerror}}\label{sec:proof_typeI_simple}
From Lemma~\ref{lem:elln}, we are searching for the minimum of $e^{1+\eta'}+\frac{1}{4}e^{2-\eta}$ under the constraint that
$$\frac{\eta'}{\sum_{t=1}^n \frac{W_t(X)}{1+W_t(X)(b_P-a_P)} }-\frac{\eta}{\sum_{t=1}^n \frac{W_t(X)}{1-W_t(X)(b_P-a_P)}}\le 0$$
and 
$$\frac{\eta'}{\sum_{t=1}^m \frac{W_t(Y)}{1+W_t(Y)(b_Q-a_Q)}}-\frac{\eta}{\sum_{t=1}^m \frac{W_t(Y)}{1-W_t(Y)(b_Q-a_Q)}}\le 0.$$
We take 
$$ \eta \ge  \eta'\max\left(\frac{\sum_{t=1}^n \frac{W_t(X)}{1-W_t(X)(b_P-a_P)}}{\sum_{t=1}^n \frac{W_t(X)}{1+W_t(X)(b_P-a_P)}},\frac{\sum_{t=1}^m \frac{W_t(Y)}{1-W_t(Y)(b_Q-a_Q)}}{\sum_{t=1}^m \frac{W_t(Y)}{1+W_t(Y)(b_Q-a_Q)}}\right).$$
Now, using the inequality $\sum \alpha_i/\sum \beta_i \le \max_i \alpha_i/\beta_i$ for $\{\alpha_i\}_i$ and $\{\beta_i\}_i$ positives (see Lemma 12.5 in \cite{orabona2019modern}), we get
\begin{align*}
\frac{\sum_{t=1}^n \frac{W_t(X)}{1-W_t(X)(b_P-a_P)}}{\sum_{t=1}^n \frac{W_t(X)}{1+W_t(X)(b_P-a_P)}}&\le \max\left(\frac{\sum_{t=1}^{t_0} \frac{W_t(X)}{1-W_t(X)(b_P-a_P)}}{\sum_{t=1}^{t_0} \frac{W_t(X)}{1+W_t(X)(b_P-a_P)}}, \frac{\sum_{t=t_0}^{n} \frac{W_t(X)}{1-W_t(X)(b_P-a_P)}}{\sum_{t=t_0}^{n} \frac{W_t(X)}{1+W_t(X)(b_P-a_P)}}\right)\\
&\le \max\left(\frac{\sum_{t=1}^{t_0} \frac{W_t(X)}{1-W_t(X)(b_P-a_P)}}{\sum_{t=1}^{t_0} \frac{W_t(X)}{1+W_t(X)(b_P-a_P)}},\max_{t_0\le t\le n} \frac{\frac{W_t(X)}{1-W_t(X)(b_P-a_P)}}{\frac{W_t(X)}{1+W_t(X)(b_P-a_P)}}\right)
\end{align*}
Then, having that $W_t(X)$ is non-increasing, the first term of the maximum is the largest and for any $t \le t_0$, 
\begin{align*}
\frac{\sum_{t=1}^n \frac{W_t(X)}{1-W_t(X)(b_P-a_P)}}{\sum_{t=1}^n \frac{W_t(X)}{1+W_t(X)(b_P-a_P)}}&\le\frac{\sum_{t=1}^{t_0} \frac{W_t(X)}{1-W_t(X)(b_P-a_P)}}{\sum_{t=1}^{t_0} \frac{W_t(X)}{1+W_t(X)(b_P-a_P)}}.
\end{align*}
We have the same for $W_t(Y)$. Then, we denote 
$$C_{t_0} =\max\left(\frac{\sum_{t=1}^{t_0} \frac{W_t(X)}{1-W_t(X)(b_P-a_P)}}{\sum_{t=1}^{t_0} \frac{W_t(X)}{1+W_t(X)(b_P-a_P)}}, \frac{\sum_{t=1}^{t_0} \frac{W_t(Y)}{1-W_t(Y)(b_Q-a_Q)}}{\sum_{t=1}^{t_0} \frac{W_t(Y)}{1+W_t(Y)(b_Q-a_Q)}}\right).$$ 
We want to minimize for $\eta'$ the function
$$J(\eta') = \alpha^{1+\eta'}+ \frac{1}{4}\alpha^{2 - C_{t_0} \eta'} $$
Taking the derivative of $J$, we get that the optimal $\eta'$ satisfies the equation
$$\log(\alpha)\alpha^{1+\eta'} - \frac{C_{t_0}\log(\alpha)\alpha^{2 - C_{t_0}\eta'}}{4}=0$$
Hence, $\alpha^{\eta'(1+C_{t_0}) - 1} = \frac{C_{t_0}}{4}$, which gives us 
$$\alpha^{\eta'} = \left(\frac{\alpha C_{t_0}}{4}\right)^{\frac{1}{1+C_{t_0}}}.$$
Injecting this into the error probability gives the result. 

\subsection{Proof of Theorem~\ref{th:typeII_hoeffding}}\label{sec:proof_th_typeII_hoeffding}
From Equation~\eqref{eq:length_hoeffding}, we have 

$$L(C_n(\alpha;X,w^f))+ L(C_m(\alpha;Y,w^f))\le L_H(P)+L_H(Q) \le \Delta$$
Hence, if the two interval intersect, we know that we cannot have at the same time $\E_P[X] \in C_n(\alpha;X,w^f)$ and $ \E_Q[Y] \in C_m(\alpha;Y,w^f)$. In fact we have even more, let $[A(X),B(X)] = C_n(\alpha;X,w^f)$ and $[A(Y),B(Y)] = C_n(\alpha;Y,w^f)$ such that $E_n(A(X);X,W) = E_n(B(X);X,W) = 1/\alpha$, and similarly $[A(Y),B(Y)] = C_m(\alpha;Y,w^f)$. We have
\begin{itemize}
\item If $\E_P[X] \in C_n(\alpha;X,w^f)$, having $\E_Q[Y]-\E_P[X] = \Delta \ge L(C_n(\alpha,X,w^f))+ L(C_m(\alpha,Y,w^f))$, we get $\E_Q[Y] \ge \Delta + \left(B(Y)- (L(C_n(\alpha,X,w^f))+ L(C_m(\alpha,Y,w^f)))\right)$. Hence, using Lemma~\ref{lem:bound_deriv},
\begin{align*}
\log(E_m(\E_Q[Y];Y)) &\ge \log(E_m\left(B(Y) + \Delta - \left(L(C_n(\alpha,X,w^f))+ L(C_m(\alpha,Y,w^f))\right);Y\right)) \\
&\ge \log(E_m\left(B(Y) + \Delta - \left(L_H(P)+L_H(Q)\right);Y\right))\\
&\ge \log(E_m\left(B(Y) ;Y\right)) + (\Delta - \left(L_H(P)+L_H(Q)\right)) \inf_{z} \left|\frac{\d }{\d z} \log(E_m\left(z;Y\right)) \right|\\
&\ge \log\left(\frac{1}{\alpha}\right) + (\Delta-\left(L_H(P)+L_H(Q)\right)) \sum_{t=1}^n \frac{w_t^f(Q)}{1+w_t^f(Q)(b_Q-a_Q)}.
\end{align*}
\item If $\E_Q[Y] \in C_n(\alpha;X,w^f)$, similarly, we have 
\begin{align*}
\log(E_n(\E_Q[X];X)) &\ge \log(E_n\left(A(X) - (\Delta - \left(L_H(P)+L_H(Q)\right));X\right)) \\
&\ge \log\left(\frac{1}{\alpha}\right) + (\Delta-\left(L_H(P)+L_H(Q)\right)) \sum_{t=1}^n \frac{w_t^f(P)}{1+w_t^f(P)(b_P-a_P)}.
\end{align*}
\end{itemize}
Hence, by Markov inequality's and Union bound, we have
\begin{align*}
\P&\left(C_n(\alpha;X,w^f) \cap C_m(\alpha;Y,w^f) \neq \emptyset\right)\\
&\le  \P\left(\log(E_m(\E_Q[Y];Y))  \ge \log\left(\frac{1}{\alpha}\right) + (\Delta-L_H(P)-L_H(Q)) \sum_{t=1}^n \frac{w_t^f(Q)}{1+w_t^f(Q)(b_Q-a_Q)}\right) \\
&\phantom{=}+ \P\left(\log(E_n(\E_Q[X];X))  \ge \log\left(\frac{1}{\alpha}\right) +(\Delta-L_H(P)-L_H(Q)) \sum_{t=1}^n \frac{w_t^f(P)}{1+w_t^f(P)(b_P-a_P)}\right)\\
&\le \alpha\left(e^{-(\Delta-L_H(P)-L_H(Q)) \sum_{t=1}^n \frac{w_t^f(P)}{1+w_t^f(P)(b_P-a_P)}}+ e^{-(\Delta-L_H(P)-L_H(Q)) \sum_{t=1}^n \frac{w_t^f(Q)}{1+w_t^f(Q)(b_Q-a_Q)}}\right).
\end{align*}
\subsection{Proof of Theorem~\ref{th:typeIII}}\label{sec:proof_th_typeIII}
We decompose the event $C_n(\alpha;X) < C_m(\alpha;Y)$ into three events (Events 1 and 2 are represented in Figure~\ref{fig:typeIII}):
\begin{description}
\item[Event 1] $\E_Q[Y] \in C_m(\alpha;Y)$, and as a consequence $\E_P[X] > C_n(\alpha;X)+\Delta$,
\item[Event 2] $\E_P[X] \in C_n(\alpha;X)$ , and as a consequence $\E_Q[Y] < C_m(\alpha;Y)-\Delta$,
\item[Event 3] $\E_P[X] \notin C_n(\alpha;X)$ and $\E_Q[Y] \notin C_m(\alpha;Y)$.
\end{description}

Event 3 has a probability smaller than $\alpha^2$ because the two samples $X_1,\dots,X_n$ and $Y_1,\dots,Y_m$ are independent, and both $C_n(\alpha;X)$ and $C_m(\alpha;Y)$ are confidence intervals of level $1-\alpha$

\begin{figure}[h]
  \begin{center}
\minipage{0.45\textwidth}

\begin{tikzpicture}[font=\small, scale=0.8] 

\path (5,0) to[bracket-bracket=80] (8,0);
\fill[opacity = 0.2, blue] (5,-0.15) -- (8, -0.15) -- (8, 0.15) -- (5,0.15) -- cycle;
\node at (6.5,-.5) {$C_\alpha(Y_1^n)$};

\path (1,0)  to[bracket-bracket=80]  (4,0) ;
\fill[opacity = 0.2, blue] (1,-0.15) -- (4, -0.15) -- (4, 0.15) -- (1,0.15) -- cycle;
\node at (2.5,-.5) {$C_\alpha(X_1^n)$};


\draw[-Stealth] (0.5,0) coordinate (1,0) -- (9,0) coordinate (end);


\draw[shift=(right:6)] (0pt,3pt) -- (0pt,-3pt) node[above, yshift=0.4em] {$\E_Q[Y]$};
\draw[shift=(right:7.8)] (0pt,3pt) -- (0pt,-3pt) node[above, yshift=0.4em] {$\E_P[X]$};

\draw[Stealth-Stealth] (6,1) --  node [midway,above]{$\Delta$}  (7.8,1);
\draw[densely dotted] (6,0) -- (6,1);
\draw[densely dotted] (7.8,0) -- (7.8,1);



\end{tikzpicture}
\captionof{figure}{Event 1}
\endminipage
\hspace{0.5em}
\minipage{0.45\textwidth}

\begin{tikzpicture}[font=\small, scale=0.8] 

\path (5,0) to[bracket-bracket=80] (8,0);
\fill[opacity = 0.2, blue] (5,-0.15) -- (8, -0.15) -- (8, 0.15) -- (5,0.15) -- cycle;
\node at (6.5,-.5) {$C_\alpha(Y_1^n)$};

\path (1,0)  to[bracket-bracket=80]  (4,0) ;
\fill[opacity = 0.2, blue] (1,-0.15) -- (4, -0.15) -- (4, 0.15) -- (1,0.15) -- cycle;
\node at (2.5,-.5) {$C_\alpha(X_1^n)$};


\draw[-Stealth] (0.5,0) coordinate (1,0) -- (9,0) coordinate (end);


\draw[shift=(right:1.2)] (0pt,3pt) -- (0pt,-3pt) node[above, yshift=0.4em] {$\E_Q[Y]$};
\draw[shift=(right:3)] (0pt,3pt) -- (0pt,-3pt) node[above, yshift=0.4em] {$\E_P[X]$};

\draw[Stealth-Stealth] (1.2,1) --  node [midway,above]{$\Delta$}  (3,1);
\draw[densely dotted] (1.2,0) -- (1.2,1);
\draw[densely dotted] (3,0) -- (3,1);



\end{tikzpicture}
\captionof{figure}{Event 2}
\endminipage
\caption{Illustration of Event 1 and Event 2.\label{fig:typeIII}}
  \end{center}
\end{figure}
On the other hand, in Event 1, using Lemma~\ref{lem:bound_deriv}, we have 
\begin{align*}
\log(E_n(\E_P[X];X)) &\ge \log(E_n(\max C_n(\alpha;X)+\Delta;X)) \\
&\ge \log(1/\alpha) + \Delta\inf_{z} \left|\frac{\d }{\d z} \log(E_n(z;X))\right|\\
&\ge \log(1/\alpha) + \Delta \sum_{t=1}^n \frac{W_t(X)}{1+W_t(X)(b_P-a_P)}\\
&\ge \log(1/\alpha) + \Delta \sum_{t=1}^{t_0} \frac{W_t(X)}{1+W_t(X)(b_P-a_P)}.
\end{align*}
Similarly, on Event 2 
\begin{align*}
\log(E_m(\E_Q[Y];Y)) &\ge \log(1/\alpha) + \Delta \sum_{t=1}^{t_0} \frac{W_t(Y)}{1+W_t(Y)(b_Q-a_Q)},
\end{align*}
and by Ville's inequality and union bound, we have that the probability of either of these two inequalities ever happening for some $n,m\ge t_0$ is smaller than 
$$\alpha\left(e^{-\Delta\sum_{t=1}^{t_0} \frac{W_t(Y)}{1+W_t(Y)(b_Q-a_Q)}}+e^{-\Delta\sum_{t=1}^{t_0} \frac{W_t(X)}{1+W_t(X)(b_P-a_P)}}\right).$$

\section{Technical tools and proofs of Lemmas}
\subsection{Basic results on $E_n$}
We present in this Section two lemmas with basic properties of the function $E_n$.
\begin{lemma}\label{lem:convex}
The function $z \mapsto \log(E_n(z; X))$ is convex over $[a_P,b_P]$.
\end{lemma}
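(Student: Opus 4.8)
The plan is to show convexity of $z \mapsto \log(E_n(z;X))$ by working directly from the definition in Equation~\eqref{eq:def_En}. First I would observe that $E_n(z;X) = \tfrac12\max\bigl(P_n(z), M_n(z)\bigr)$ where $P_n(z) = \prod_{t=1}^n (1+W_t(X)(X_t-z))$ and $M_n(z) = \prod_{t=1}^n (1-W_t(X)(X_t-z))$. Since the maximum of two convex functions is convex, and the pointwise logarithm of a positive function is monotone in a way that doesn't commute with $\max$ in general, the cleaner route is: $\log E_n(z;X) = -\log 2 + \max\bigl(\log P_n(z), \log M_n(z)\bigr)$, because $\log$ is increasing, so $\log\max(a,b) = \max(\log a, \log b)$ for positive $a,b$. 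Thus it suffices to prove that each of $\log P_n(z)$ and $\log M_n(z)$ is convex in $z$ on $[a_P,b_P]$, and then the maximum of two convex functions is convex.

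Next I would verify convexity of $\log P_n(z) = \sum_{t=1}^n \log\bigl(1 + W_t(X)(X_t - z)\bigr)$. For each fixed $t$, the term is $g_t(z) = \log(1 + W_t(X)(X_t - z))$; note this is well-defined and positive-argument because the hypotheses guarantee $W_t(X)|x - z| \le 1$ for $x$ in the support, so $1 + W_t(X)(X_t-z) > 0$ (one has to be slightly careful at the boundary, but the strict inequality $< 1$ assumption in the construction, or a limiting argument, handles the endpoint). Then $g_t$ is a composition $\log \circ (\text{affine})$; more directly, $g_t''(z) = \bigl(W_t(X)\bigr)^2 / (1 + W_t(X)(X_t - z))^2 > 0$, so $g_t$ is convex. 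Summing over $t$ keeps convexity, so $\log P_n$ is convex. The identical computation with sign flipped, $h_t(z) = \log(1 - W_t(X)(X_t-z))$, gives $h_t''(z) = \bigl(W_t(X)\bigr)^2/(1 - W_t(X)(X_t-z))^2 > 0$, so $\log M_n$ is convex as well.

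Finally, combining the two pieces: $\log E_n(z;X) = -\log 2 + \max\bigl(\log P_n(z), \log M_n(z)\bigr)$ is the maximum of two convex functions (up to an additive constant), hence convex on $[a_P, b_P]$. I do not expect any serious obstacle here; the only minor technical point to handle carefully is the domain/positivity issue at the endpoints $z = a_P$ or $z = b_P$, where $1 \pm W_t(X)(X_t - z)$ could in principle vanish — but under the stated assumption that $W_t(X)|x-m|$ is (strictly) smaller than $1$, the logarithms are well-defined and the second-derivative computation is valid throughout. If one wanted to be fully rigorous at closed endpoints one could argue convexity on the open interval and extend by continuity.
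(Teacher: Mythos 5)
Your proof is correct and follows essentially the same route as the paper: both arguments reduce to checking that each of the two log-products $\log\prod_t(1\pm W_t(X)(X_t-z))$ has positive second derivative $\sum_t W_t(X)^2/(1\pm W_t(X)(X_t-z))^2$. If anything, your write-up is cleaner at the one delicate point: the paper only computes second derivatives separately on $\{z<\widehat{\mu}\}$ and $\{z>\widehat{\mu}\}$ (where $\widehat{\mu}$ is the minimizer at which the two products cross), which by itself does not exclude a concave kink at $\widehat{\mu}$, whereas your identity $\log E_n(z;X) = -\log 2 + \max\bigl(\log P_n(z),\log M_n(z)\bigr)$ together with the fact that a maximum of convex functions is convex handles that point rigorously.
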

\begin{proof}
Let us denote $\widehat{\mu} \in \arg\min_{z \in \R} E_n(z; X)$ which also corresponds to the value of $z$ such that the two terms in the maximum of the difinition of $E_n$ (Equation~\eqref{eq:def_En}) are equal.

The function $z \mapsto \log(E_n(z;X))$ is differentiable everywhere except in $z = \widehat{\mu}$ and for $z >  \widehat{\mu}$, its derivative is  
\begin{align*}
\frac{\d }{\d z} \log(E_n(z;X))&= \sum_{t=1}^n \frac{-W_t(X)}{1+W_t(X)(X_t-z)}
\end{align*}
and if $z < \widehat{\mu}$,
\begin{align*}
\frac{\d }{\d z} \log(E_n(z;X))&=\sum_{t=1}^n \frac{W_t(X)}{1-W_t(X) (X_t-z)}.
\end{align*}
and then, we can compute the second derivative on each of the two portion of $[0,1]$ and get for $z \ge \widehat{\mu}$, 
\begin{align*}
\frac{\d^2 }{\d z^2} \log(E_n(z;X))&=\sum_{t=1}^n \frac{W_t(X)^2}{(1+W_t(X) (X_t-z))^2}>0
\end{align*}
and for $z < \widehat{\mu}$, similarly we get
\begin{align*}
\frac{\d^2 }{\d z^2} \log(E_n(z;X))&= \sum_{t=1}^n \frac{W_t(X)^2}{(1-W_t(X) (X_t-z))^2} >0 
\end{align*}
Hence, $z \mapsto \log(E_n(z;X))$ is convex. 
\end{proof}

\begin{lemma}\label{lem:bound_deriv}
Let $X_1,\dots,X_n$ be i.i.d. from a law $P$ with support in $[a_P,b_P]$ for some $a_P,b_P\in \R$, let $\widehat{\mu} \in \arg\min_{z} \log(E_n(z;X))$, for any $z \neq \widehat{\mu}$, $z \mapsto \log(E_n(z;X))$ is derivable with derivative bounded as follows
$$ \sum_{t=1}^n \frac{W_t(X)}{1+ W_t(X)(b_P-a_P)} \le \left|\frac{\d }{\d z} \log(E_n(z;X))\right| \le  \sum_{t=1}^n \frac{W_t(X)}{1- W_t(X)(b_P-a_P)}.$$
\end{lemma}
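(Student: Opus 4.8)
# Proof Proposal for Lemma~\ref{lem:bound_deriv}

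The plan is to bound, term by term, the expressions already computed for the derivative of $z \mapsto \log(E_n(z;X))$ in the proof of Lemma~\ref{lem:convex}. Recall that for $z > \widehat{\mu}$ the derivative equals $-\sum_{t=1}^n \frac{W_t(X)}{1+W_t(X)(X_t-z)}$ and for $z < \widehat{\mu}$ it equals $\sum_{t=1}^n \frac{W_t(X)}{1-W_t(X)(X_t-z)}$. In both cases the absolute value is a sum of terms of the form $\frac{W_t(X)}{1 + W_t(X)\delta_t}$ with $\delta_t = \pm(X_t - z)$, and each term is positive because the standing assumption on the weights guarantees $W_t(X)|x - z| \le W_t(X)|x-m| < 1$ for $x$ in the support (this is the measurability-plus-boundedness hypothesis on $\{W_t\}$ introduced after Equation~\eqref{eq:def_En}); I would note at the outset that the relevant $z$ here lies between the empirical mean $\widehat\mu$ and a point still controlled by the support, so the denominators stay positive.

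First I would handle a single term. Since $X_t \in [a_P, b_P]$, the signed quantity $\delta_t = \pm(X_t - z)$ satisfies $-(b_P - a_P) \le \delta_t \le b_P - a_P$ whenever $z$ itself lies in the support interval (which holds on the range of $z$ relevant to the lemma's applications, as $z$ ranges over confidence-interval endpoints). The map $\delta \mapsto \frac{W_t(X)}{1 + W_t(X)\delta}$ is decreasing in $\delta$ on the region where the denominator is positive, so it attains its maximum at $\delta = -(b_P-a_P)$, giving the upper bound $\frac{W_t(X)}{1 - W_t(X)(b_P-a_P)}$, and its minimum at $\delta = b_P-a_P$, giving the lower bound $\frac{W_t(X)}{1 + W_t(X)(b_P-a_P)}$. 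Summing over $t = 1, \dots, n$ yields exactly the two-sided bound in the statement.

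The only genuinely delicate point is making sure the denominators in the bounds are positive and the monotonicity argument is legitimate, i.e. that $1 - W_t(X)(b_P - a_P) > 0$; but this is immediate from the hypothesis $W_t(X)|x - m| < 1$ for all $x$ in the support, taking $x$ at either endpoint, which forces $W_t(X)(b_P - a_P) < 1$. With that observed, the rest is the elementary monotonicity estimate above applied termwise, so I do not expect any real obstacle — the lemma is essentially a bookkeeping consequence of Lemma~\ref{lem:convex}. I would close by remarking that the bound holds uniformly in $z \neq \widehat\mu$ (on the relevant range), which is what is needed when it is invoked via $\inf_z |\frac{\d}{\d z}\log E_n(z;X)|$ in the proofs of Theorems~\ref{th:typeII_hoeffding} and~\ref{th:typeIII}.
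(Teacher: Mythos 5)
Your proof is correct and follows essentially the same route as the paper's: differentiate $\log E_n(z;X)$ on each side of $\widehat{\mu}$ (reusing the computation from Lemma~\ref{lem:convex}) and bound each summand $W_t(X)/(1 \pm W_t(X)(X_t-z))$ termwise via the monotonicity in $\delta_t = \pm(X_t-z)$ together with $|X_t - z| \le b_P - a_P$. Your explicit remarks on the positivity of the denominators and on restricting $z$ to the support interval are, if anything, slightly more careful than the paper's one-line justification.
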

\begin{proof}
We have for any $z > \widehat{\mu}$
\begin{align*}
\frac{\d }{\d z} \log(E_n(z;X))&=   \sum_{t=1}^n \frac{W_t}{1-W_t (X_t-z)} 
\end{align*}
and using that $X_t - z \in [a_P,b_P]$, we get 
$$\sum_{t=1}^n \frac{W_t(X)}{1+ W_t(X)(b_P-a_P)} \le \frac{\d }{\d z} \log(E_n(z;X)) \le \sum_{t=1}^n \frac{W_t(X)}{1- W_t(X)(b_P-a_P)}$$

Similarly, for $z < \widehat{\mu}$,
\begin{align*}
\frac{\d }{\d z} \log(E_n(z;X))&=   \sum_{t=1}^n \frac{-W_t}{1+W_t (X_t-z)} 
\end{align*}
and then again using that $X_t$ is bounded, we get the result,
$$-\sum_{t=1}^n \frac{W_t(X)}{1- W_t(X)(b_P-a_P)} \le \frac{\d }{\d z} \log(E_n(z;X)) \le -\sum_{t=1}^n \frac{W_t(X)}{1+ W_t(X)(b_P-a_P)}.$$
\end{proof}

\subsection{Proof of Lemma~\ref{lem:length}}
The following inequality on the logarithm will be used to bound the length of the confidence interval.
\begin{lemma}[Inequality on the logarithm]\label{lem:log_ineq}
For any $x$ such that $|x|\le c\le 1$, we have $\log(1+x)\ge \frac{x(2-c)}{2-c+x}$. 
\end{lemma}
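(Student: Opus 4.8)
The goal is to prove the elementary inequality $\log(1+x) \ge \frac{x(2-c)}{2-c+x}$ for all $x$ with $|x| \le c \le 1$. The natural approach is to fix $c \in (0,1]$ and study the auxiliary function $g(x) = \log(1+x) - \frac{x(2-c)}{2-c+x}$ on the interval $[-c, c]$; I want to show $g \ge 0$ there. First I would note that $g(0) = 0$, so it suffices to understand the sign of $g'$ and show that $x=0$ is a minimum. Differentiating, $g'(x) = \frac{1}{1+x} - \frac{(2-c)^2}{(2-c+x)^2}$ after simplifying the quotient derivative $\frac{\d}{\d x}\left(\frac{x(2-c)}{2-c+x}\right) = \frac{(2-c)^2}{(2-c+x)^2}$. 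So $g'(x) \ge 0$ iff $(2-c+x)^2 \ge (2-c)^2(1+x)$, i.e. iff $(2-c)^2 + 2(2-c)x + x^2 \ge (2-c)^2 + (2-c)^2 x$, which rearranges to $x^2 \ge \big((2-c)^2 - 2(2-c)\big)x = (2-c)(-c)\,x = -c(2-c)x$. Thus $g'(x) \ge 0 \iff x^2 + c(2-c)x \ge 0 \iff x\big(x + c(2-c)\big) \ge 0$.

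From here the sign analysis is immediate: since $0 < c(2-c) \le 1$ when $c \in (0,1]$ (indeed $c(2-c) = 1-(1-c)^2 \le 1$), and since $x \ge -c \ge -c(2-c)$ is \emph{not} quite what I need — rather I should check that on $[-c,0]$ the factor $x + c(2-c)$ stays nonnegative. Here the key observation is $c(2-c) \ge c$ for $c \in (0,1]$ (equivalent to $2-c \ge 1$), so for $x \in [-c, 0]$ we have $x + c(2-c) \ge -c + c = 0$, hence $g'(x) = \frac{x(x+c(2-c))}{(1+x)(2-c+x)^2} \le 0$ on $[-c,0]$ (numerator $\le 0$, denominator $> 0$ since $1+x \ge 1-c \ge 0$ and the squared term is positive), and $g'(x) \ge 0$ on $[0,c]$. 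Therefore $g$ is decreasing on $[-c,0]$ and increasing on $[0,c]$, so $g(x) \ge g(0) = 0$ on all of $[-c,c]$, which is the claim.

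\textbf{Main obstacle.} The only subtlety is the endpoint/denominator bookkeeping: one must verify $1 + x > 0$ and $2 - c + x > 0$ throughout $[-c, c]$ so that $g$ is genuinely differentiable and the sign of $g'$ is controlled by its numerator. Both follow from $c \le 1$: $1 + x \ge 1 - c \ge 0$ (with equality only at the boundary case $x = -c = -1$, which one can handle by a separate limiting argument or by noting $\log(1+x) \to -\infty$ makes the inequality trivially... actually false there, so one should read the hypothesis as $|x| \le c \le 1$ with the understanding $x > -1$, or restrict to $c < 1$), and $2 - c + x \ge 2 - c - c = 2 - 2c \ge 0$. A clean way to sidestep the edge case entirely is to assume $x \in (-1, \infty)$ implicitly (as is standard for $\log(1+x)$) and carry out the monotonicity argument on any compact subinterval; the inequality then extends by continuity. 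No other step requires more than the algebraic rearrangement above.
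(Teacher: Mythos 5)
Your proof is correct and follows essentially the same route as the paper's: both define the difference $f(x)=\log(1+x)-\frac{x(2-c)}{2-c+x}$, compute $f'(x)=\frac{x\left(x+c(2-c)\right)}{(1+x)(2-c+x)^2}$, and conclude by sign analysis that $f$ attains its minimum $f(0)=0$ on $[-c,c]$. Your extra care about the degenerate endpoint $x=-c=-1$ (where the paper is silent) is a harmless refinement, not a different argument.
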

\begin{proof}
Let $f(x) = \log(1+x) - \frac{x(2-c)}{2-c+x}$. We have for any $x$ such that $|x|\le c$,
\begin{align*}
 f'(x) &= \frac{1}{1+x}-\frac{(2-c)(2-c+x) - x(2-c)}{(2-c+x)^2} \\
&= \frac{1}{1+x}-\frac{(2-c)^2}{(2-c+x)^2} \\
&= \frac{(2-c+x)^2 - (1+x)(2-c)^2}{(1+x)(2-c+x)^2}\\
&=  \frac{x\left(4-2c+ x- (4-4c+c^2)\right)}{(1+x)(2-c+x)^2}= \frac{x\left(2c+ x- c^2\right)}{(1+x)(2-c+x)^2}
\end{align*}
and having $|x|\le c\le 1$, we get that $f'(x)$ is non-positive for $-c\le x\le 0$ and non-negative for $0\le x\le c$. Then, having $f(0) = 0$ we get that $f(x)\ge 0$.
\end{proof}
 Hence, having that $w(X_i-z)\le w(b_P-a_P)$, from Lemma~\ref{lem:log_ineq},
\begin{align*}
\log(2E_n(z;X)) &\ge \sum_{t=1}^n \log\left(1 + w(X_t-z)\right) \\
&\ge \sum_{t=1}^n \frac{w(X_t-z)(2-w(b_P-a_P))}{2-w(b_P-a_P)+ w(X_t-z)}\\
&= w\sum_{t=1}^n (X_t-z) + \sum_{t=1}^nw(X_t-z)\left( \frac{2-w(b_P-a_P)}{2-w(b_P-a_P)+w(X_t-z)} - 1\right)\\
&= w\sum_{t=1}^n (X_t-z) - \sum_{t=1}^n\frac{w^2(X_t-z)^2}{2-w(b_P-a_P)+w(X_t-z)}\\
&\ge w\sum_{t=1}^n (X_t-z) - \sum_{t=1}^n\frac{w^2(X_t-z)^2}{2(1-w(b_P-a_P))}
\end{align*}
Where we used $X_t \in [a_P,b_P]$ for the last inequality. Let $v = \frac{w^2}{1-w(b_P-a_P)}$, we are then searching for $z$ such that 
\begin{equation}\label{eq:poly2_power} 
nw(\overline{X}_n-z)- \frac{v}{2} \sum_{t=1}^n(X_t-z)^2\ge \log(2/\alpha),
\end{equation}
And, by bias-variance decomposition, this reduces to
\begin{equation}\label{eq:poly2_power} 
nw(\overline{X}_n-z) - \frac{nv}{2} \left(\widehat{\sigma}_n^2 + \left(z - \overline{X}_n\right)^2\right)\ge \log(2/\alpha),
\end{equation}
This is a second order polynomial in $(\overline{X}_n-z)$. It has solutions if 
$$(nw)^2 \ge  2nv\left(\frac{nv}{2}\widehat{\sigma}_n^2 +\log(2/\alpha)\right) $$
which reduces to the condition on $n$
$$ nw^2 \ge  2v\left(\frac{nv}{2}\widehat{\sigma}_n^2 +\log(2/\alpha)\right) $$
and then, we solve the polynomial to have the condition
\begin{align*}
\overline{X}_n- z & \ge  \frac{nw - \sqrt{n^2w^2 - 2nv\left(\frac{nv}{2}\widehat{\sigma}_n^2 +\log(2/\alpha)\right)}}{nv}.
\end{align*}
Similarly, we have
\begin{align*}
\log(2E_n(z;X)) &\ge \sum_{t=1}^n \log\left(1 - w(X_t-z)\right) \\
&\ge -\sum_{t=1}^n \frac{w(X_t-z)(2-w(b_P-a_P))}{2-w(b_P-a_P)- w(X_t-z)}\\
&= -w\sum_{t=1}^n (X_t-z) - \sum_{t=1}^nw(X_t-z)\left( \frac{2-c}{2-w(b_P-a_P)-w(X_t-z)} + 1\right)\\
&\ge -w\sum_{t=1}^n W_t(X_t-z) - \frac{v}{2}\sum_{t=1}^n(X_t-z)^2
\end{align*}
and solving the second order polynomial, we arrive at the condition
\begin{align*}
z-\overline{X}_n & \ge  \frac{nw - \sqrt{n^2w^2 - 2nv\left(\frac{nv}{2}\widehat{\sigma}_n^2 +\log(2/\alpha)\right)}}{nv}.
\end{align*}
As a consequence, we have that the length of the interval on which $\log(2E_n(z;X)) \ge \log(2/\alpha)$ is bounded by 
$$2\frac{nw - \sqrt{n^2w^2 - 2nv\left(\frac{nv}{2}\widehat{\sigma}_n^2 +\log(2/\alpha)\right)}}{nv}$$
\subsection{Proof of Lemma~\ref{lem:elln}}\label{sec:proof_lem_elln}
Having decided $H_1^-$, we have $C_n(\alpha;X,W) \le C_m(\alpha;Y,W)$. Define $\widehat{F}$ the midpoint between the upper bound $C_n(\alpha;X,W)$ and the lower bound of $C_m(\alpha;Y,W)$.
Define the event 
$$E_L : \mu \in[\widehat{F}-L_X, \widehat{F}+L_Y]$$
with $L_X,L_Y$ that we will fix later on. We study separately the case $E_L$ true and $E_L$ false.
\paragraph{Case 1:} if $E_L$ is true, 
\begin{figure}[h]
\begin{center}

\minipage{0.45\textwidth}
\begin{tikzpicture}[font=\small, scale=0.8] 

\path (5.5,0) to[bracket-bracket=80] (8,0);
\fill[opacity = 0.2, blue] (5.5,-0.15) -- (8, -0.15) -- (8, 0.15) -- (5.5,0.15) -- cycle;
\node at (6.75,-.5) {$C_\alpha(Y_1^n)$};

\path (1,0)  to[bracket-bracket=80]  (3.5,0) ;
\fill[opacity = 0.2, blue] (1,-0.15) -- (3.5, -0.15) -- (3.5, 0.15) -- (1,0.15) -- cycle;
\node at (2.25,-.5) {$C_\alpha(X_1^n)$};


\draw[-Stealth] (0.5,0) coordinate (1,0) -- (9,0) coordinate (end);

\draw[Stealth-Stealth] (4.5,1.2) --  node [midway,above]{$L_Y$}  (6,1.2);
\draw[densely dotted] (4.5,0) -- (4.5,1.2);
\draw[densely dotted] (6,0) -- (6,1.2);

\draw[Stealth-Stealth] (4.5,0.5) --  node [midway,above]{$\ell_{n,m}$}  (5.5,0.5);
\draw[densely dotted] (4.5,0) -- (4.5,0.5);
\draw[densely dotted] (5.5,0) -- (5.5,0.5);

\draw[shift=(right:4.5)] (0pt,3pt) -- (0pt,-3pt) node[below] {$\widehat{F}$};
\draw[shift=(right:5.76)] (0pt,3pt) -- (0pt,-3pt) node[below] {$\mu$};




\end{tikzpicture}
\captionof{figure}{$E_L$ true: Event 1}
\endminipage
\hspace{0.5em}
\minipage{0.45\textwidth}
 
\begin{tikzpicture}[font=\small, scale=0.8] 

\path (5.5,0) to[bracket-bracket=80] (8,0);
\fill[opacity = 0.2, blue] (5.5,-0.15) -- (8, -0.15) -- (8, 0.15) -- (5.5,0.15) -- cycle;
\node at (6.75,-.5) {$C_\alpha(Y_1^n)$};

\path (1,0)  to[bracket-bracket=80]  (3.5,0) ;
\fill[opacity = 0.2, blue] (1,-0.15) -- (3.5, -0.15) -- (3.5, 0.15) -- (1,0.15) -- cycle;
\node at (2.25,-.5) {$C_\alpha(X_1^n)$};


\draw[-Stealth] (0.5,0) coordinate (1,0) -- (9,0) coordinate (end);

\draw[Stealth-Stealth] (4.5,1.2) --  node [midway,above]{$L_X$}  (3,1.2);
\draw[densely dotted] (4.5,0) -- (4.5,1.2);
\draw[densely dotted] (3,0) -- (3,1.2);

\draw[Stealth-Stealth] (4.5,0.5) --  node [midway,above]{$\ell_{n,m}$}  (3.5,0.5);
\draw[densely dotted] (4.5,0) -- (4.5,0.5);
\draw[densely dotted] (3.5,0) -- (3.5,0.5);

\draw[shift=(right:4.5)] (0pt,3pt) -- (0pt,-3pt) node[below] {$\widehat{F}$};
\draw[shift=(right:3.2)] (0pt,3pt) -- (0pt,-3pt) node[below] {$\mu$};



\end{tikzpicture}
\captionof{figure}{$E_L$ true: Event 2}
\endminipage
\end{center}
\end{figure}
\begin{description}
\item[Event 1] If $\mu \ge \widehat{F}$, then $\mu \notin C_n(\alpha;X,W)$ and $E_n(\mu;X,W) = \frac{1}{2}\prod_{t=1}^n (1-W_t(X)(X_t-\mu))\ge E_n(\widehat{F};X,W)  \ge 1/\alpha$ and having that $\mu -(L_Y-\ell_{n,m}) \notin C_m(\alpha;Y,W)$, we have $E_m(\mu -(L_Y-\ell_{n,m});Y,W)=\frac{1}{2}\prod_{t=1}^n\left(1 + W_t(Y)(Y_t-\mu +(L_Y-\ell_{n,m}))\right) \ge 1/\alpha.$
\item[Event 2] If $\mu < \widehat{F}$, then $\mu \notin C_m(\alpha;Y,W)$ and $E_m(\mu;Y,W) = \frac{1}{2}\prod_{t=1}^m (1+W_t(Y)(Y_t-\mu))\ge E_m(\widehat{F};Y,W)  \ge 1/\alpha$ and having that $\mu +(L_X-\ell_{n,m}) \notin C_n(\alpha;X,W)$, we have $E_n(\mu +(L_X-\ell_{n,m});X,W)=\frac{1}{2}\prod_{t=1}^n (1-W_t(X)(X_t-\mu-(L_X-\ell_{n,m}))) \ge 1/\alpha.$
\end{description}
Then, using the bound on the derivative of $E_n$ (Lemma~\ref{lem:bound_deriv}), on Event 1 we have
\begin{align*}
\log(1/\alpha)&\le \log(E_m(\mu - \left(L_Y-\ell_{n,m}\right);Y,W))\\
&= \log\left(\frac{1}{2}\prod_{t=1}^m (1-W_t(Y)(Y_t-\left(L_Y-\ell_{n,m}\right)))\right)\\
&\le \log\left(\frac{1}{2}\prod_{t=1}^m (1+W_t(Y)(Y_t-\mu))\right) + \left(L_Y-\ell_{n,m}\right)\sup_{z} \left| \frac{\d}{\d z}\log\left(\frac{1}{2}\prod_{t=1}^m (1+W_t(Y)(Y_t-z))\right)\right|\\
&\le \log\left(\frac{1}{2}\prod_{t=1}^m (1+W_t(Y)(Y_t-\mu))\right) + \left(L_Y-\ell_{n,m}\right)\sup_{z}\sum_{t=1}^m \frac{W_t(Y)}{1-W_t(Y)(Y_t-z)}\\
&\le \log\left(\frac{1}{2}\prod_{t=1}^m (1+W_t(Y)(Y_t-\mu))\right) + \left(L_Y-\ell_{n,m}\right)\sum_{t=1}^m \frac{W_t(Y)}{1-W_t(Y)(b_Q-a_Q)}
\end{align*} 
and similarly, on Event 2, 
\begin{align*}
\log(1/\alpha)&\le \log\left(\frac{1}{2}\prod_{t=1}^n (1-W_t(X)(X_t-\mu))\right) + \left(L_X-\ell_{n,m}\right)\sum_{t=1}^n \frac{W_t(X)}{1-W_t(X)(b_P-a_P)}.
\end{align*}

Let us take $L_X, L_Y$ such that 
\begin{equation}\label{eq:Lell1}
 L_X -\ell_{n,m} \le  \frac{\eta\log(1/\alpha)}{\sum_{t=1}^n \frac{W_t(X)}{1-W_t(X)(b_P-a_P)} },
\quad \text{and}\ \
 L_Y -\ell_{n,m} \le  \frac{\eta\log(1/\alpha)}{\sum_{t=1}^n \frac{W_t(Y)}{1-W_t(Y)(b_Q-a_Q)} }.
\end{equation}
Then, we have on Event 1,
\begin{align*}
\log(1/\alpha)\le \log\left(\frac{1}{2}\prod_{t=1}^m (1+W_t(Y)(Y_t-\mu))\right) + \eta\log(1/\alpha).
\end{align*}
and on Event 2,
\begin{align*}
\log(1/\alpha)\le \log\left(\frac{1}{2}\prod_{t=1}^n (1-W_t(X)(X_t-\mu))\right) + \eta\log(1/\alpha).
\end{align*}
We get a similar result on Event 2 which implies that the following are true on Event 1 and Event 2 respectively.
\begin{description}
\item[Event 1] If $\mu \ge \widehat{F}$, then $ \frac{1}{2}\prod_{t=1}^n (1-W_t(X)(X_t-\mu))\ge 1/\alpha$ and $\frac{1}{2}\prod_{t=1}^m (1+W_t(Y)(Y_t-\mu)) \ge  e^{-\eta\log(1/\alpha)}/\alpha$.
\item[Event 2] If $\mu < \widehat{F}$, then $ \frac{1}{2}\prod_{t=1}^m (1+W_n(Y)(Y_t-\mu))\ge  1/\alpha$ and $\frac{1}{2}\prod_{t=1}^n (1-W_n(X)(X_t-\mu)))\ge e^{-\eta\log(1/\alpha)}/\alpha$
\end{description}
Then, we can regroup the two cases get that if $E_L$ is true and $C_n(\alpha;X,W) \le C_m(\alpha;Y,W)$, then 
\begin{equation}\label{eq:ville1}
 \frac{1}{4}\prod_{t=1}^n (1-W_t(X)(X_t-\mu))\prod_{t=1}^m (1+W_t(Y)(Y_t-\mu)) \ge e^{-\eta\log(1/\alpha) }/\alpha^2
\end{equation}

Using that $\prod_{i=1}^n (1-W_t(X)(X_i-\mu))\prod_{t=1}^m (1+W_t(Y)(Y_i-\mu))$ is a non-negative martingale with mean $1$, by Ville's inequality, Equation~\eqref{eq:ville1} happens with probability smaller than $\frac{\alpha^2}{4}e^{\eta\log(1/\alpha)} = \frac{1}{4}\alpha^{2-\eta}$.

\paragraph{Case 2:} if $E_L$ is false, then we consider two events

\begin{figure}[h]
\begin{center}
\minipage{0.45\textwidth}

\begin{tikzpicture}[font=\small, scale=0.8] 

\path (5.5,0) to[bracket-bracket=80] (8,0);
\fill[opacity = 0.2, blue] (5.5,-0.15) -- (8, -0.15) -- (8, 0.15) -- (5.5,0.15) -- cycle;
\node at (6.75,-.5) {$C_\alpha(Y_1^n)$};

\path (1,0)  to[bracket-bracket=80]  (3.5,0) ;
\fill[opacity = 0.2, blue] (1,-0.15) -- (3.5, -0.15) -- (3.5, 0.15) -- (1,0.15) -- cycle;
\node at (2.25,-.5) {$C_\alpha(X_1^n)$};


\draw[-Stealth] (0.5,0) coordinate (1,0) -- (9,0) coordinate (end);

\draw[Stealth-Stealth] (4.5,1.2) --  node [midway,above]{$L_X$}  (6,1.2);
\draw[densely dotted] (4.5,0) -- (4.5,1.2);
\draw[densely dotted] (6,0) -- (6,1.2);

\draw[Stealth-Stealth] (4.5,0.5) --  node [midway,above]{$\ell_{n,m}$}  (5.5,0.5);
\draw[densely dotted] (4.5,0) -- (4.5,0.5);
\draw[densely dotted] (5.5,0) -- (5.5,0.5);

\draw[shift=(right:4.5)] (0pt,3pt) -- (0pt,-3pt) node[below] {$\widehat{F}$};
\draw[shift=(right:6.5)] (0pt,3pt) -- (0pt,-3pt) node[above, yshift=0.2em] {$\mu$};



\end{tikzpicture}
\captionof{figure}{$E_L$ false: Event 1}
\endminipage
\hspace{0.5em}
\minipage{0.45\textwidth}

\begin{tikzpicture}[font=\small, scale=0.8] 

\path (5.5,0) to[bracket-bracket=80] (8,0);
\fill[opacity = 0.2, blue] (5.5,-0.15) -- (8, -0.15) -- (8, 0.15) -- (5.5,0.15) -- cycle;
\node at (6.75,-.5) {$C_\alpha(Y_1^n)$};

\path (1,0)  to[bracket-bracket=80]  (3.5,0) ;
\fill[opacity = 0.2, blue] (1,-0.15) -- (3.5, -0.15) -- (3.5, 0.15) -- (1,0.15) -- cycle;
\node at (2.25,-.5) {$C_\alpha(X_1^n)$};


\draw[-Stealth] (0.5,0) coordinate (1,0) -- (9,0) coordinate (end);

\draw[Stealth-Stealth] (4.5,1.2) --  node [midway,above]{$L_Y$}  (3,1.2);
\draw[densely dotted] (4.5,0) -- (4.5,1.2);
\draw[densely dotted] (3,0) -- (3,1.2);

\draw[Stealth-Stealth] (4.5,0.5) --  node [midway,above]{$\ell_{n,m}$}  (3.5,0.5);
\draw[densely dotted] (4.5,0) -- (4.5,0.5);
\draw[densely dotted] (3.5,0) -- (3.5,0.5);

\draw[shift=(right:4.5)] (0pt,3pt) -- (0pt,-3pt) node[below] {$\widehat{F}$};
\draw[shift=(right:2.5)] (0pt,3pt) -- (0pt,-3pt) node[above, yshift=0.2em] {$\mu$};



\end{tikzpicture}
\captionof{figure}{$E_L$ false: Event 2}
\endminipage
\end{center}
\caption{Figure of events when $E_L$ is false}
\end{figure}

\begin{description}
\item[Event 1] If $C_n(\alpha;X,W) \le C_m(\alpha;Y,W)$ and $\mu \ge \widehat{F}+L_X$, then $\mu-(L_X+\ell_{n,m})\ge  C_n(\alpha;X,W)$.
\item[Event 2] If $C_n(\alpha;X,W) \le C_m(\alpha;Y,W)$ and $\mu < \widehat{F}-L_Y$, then $\mu+(L_Y+\ell_{n,m}) \le  C_m(\alpha;Y,W)$.
\end{description}
On Event 1, we have, using Lemma~\ref{lem:convex} and Lemma~\ref{lem:bound_deriv} and the fact that $\mu-(L_X+\ell_{n,m})\ge  C_n(\alpha;X,W)$,
\begin{align*}
\log(1/\alpha)&\le \log(E_n(\mu-(L_X+\ell_{n,m});X,W))\\
&= \log\left(\frac{1}{2}\prod_{t=1}^n (1-W_t(X)(X_t-(\mu-(L_X+\ell_{n,m}))))\right)\\
&\le \log\left(\frac{1}{2}\prod_{t=1}^n (1-W_t(X)(X_t-\mu))\right) - \left(\ell_{n,m}+L_X\right) \inf_{z}\left| \frac{\d}{\d z}\log\left(\frac{1}{2}\prod_{t=1}^n (1-W_t(X)(X_t-z))\right)\right|\\
&\le  \log\left(\frac{1}{2}\prod_{t=1}^n (1-W_t(X)(X_t-\mu))\right) -\left(\ell_{n,m}+L_X\right)\sum_{t=1}^n \frac{W_t}{1+W_t(b_P-a_P)}
\end{align*} 
and similarly on Event 2. Hence, taking
\begin{equation}\label{eq:Lell2}
L_X +\ell_{n,m} \ge   \frac{\eta'\log(1/\alpha)}{\sum_{t=1}^n \frac{W_t(X)}{1+W_t(X)(b_P-a_P)} },
\quad\text{and}\ \ 
L_Y +\ell_{n,m} \ge   \frac{\eta'\log(1/\alpha)}{\sum_{t=1}^m \frac{W_t(Y)}{1+W_t(Y)(b_Q-a_Q)} },
\end{equation}
we get on Event $1$,
$$\log(1/\alpha) \le \log\left(\frac{1}{2}\prod_{t=1}^n (1-W_t(X)(X_t-\mu))\right) -\eta' \log(1/\alpha) $$
and similarly for Event 2,
$$\log(1/\alpha) \le \log\left(\frac{1}{2}\prod_{t=1}^m (1+W_t(Y)(Y_t-\mu))\right) -\eta' \log(1/\alpha) $$
Hence, 
\begin{description}
\item[Event 1] If $C_n(\alpha;X,W) \le C_m(\alpha;Y,W)$ and $\mu \ge \widehat{F}+L$, then $\frac{1}{2}\prod_{t=1}^n (1-W_t(X)(X_t-\mu))\ge  1/\alpha^{\eta'+1}$.
\item[Event 2] If $C_n(\alpha;X,W) \le C_m(\alpha;Y,W)$ and $\mu < \widehat{F}-L$, then $\frac{1}{2}\prod_{t=1}^m (1+W_t(Y)(Y_t-\mu))\ge  1/\alpha^{\eta'+1}$.
\end{description}
Then, by Ville's inequality, Event 1 or Event 2 happen with probability smaller than $\alpha^{\eta'+1}$. Equations~\eqref{eq:Lell1} and \eqref{eq:Lell2} together lead us to choose
$$L_X = \frac{\log(1/\alpha)}{2} \left(\frac{\eta'}{\sum_{t=1}^n \frac{W_t(X)}{1+W_t(X)(b_P-a_P)} }+\frac{\eta}{\sum_{t=1}^n \frac{W_t(X)}{1-W_t(X)(b_P-a_P)} }  \right),$$
$$L_Y = \frac{\log(1/\alpha)}{2} \left(\frac{\eta'}{\sum_{t=1}^m \frac{W_t(Y)}{1+W_t(Y)(b_Q-a_Q)} }+\frac{\eta}{\sum_{t=1}^m \frac{W_t(Y)}{1-W_t(Y)(b_Q-a_Q)} }  \right)$$
and
\begin{align*}
\ell_{n,m} = \frac{\log(1/\alpha)}{2}\max&\left(\frac{\eta'}{\sum_{t=1}^n \frac{W_t(X)}{1+W_t(X)(b_P-a_P)} }-\frac{\eta}{\sum_{t=1}^n \frac{W_t(X)}{1-W_t(X)(b_P-a_P)}} \right.\\
&\phantom{=}\left., \frac{\eta'}{\sum_{t=1}^m \frac{W_t(Y)}{1+W_t(Y)(b_Q-a_Q)} }-\frac{\eta}{\sum_{t=1}^m \frac{W_t(Y)}{1-W_t(Y)(b_Q-a_Q)}} \right).
\end{align*}

Remark finally that $\ell_{n,m}\mapsto \P\left(\exists n,m\ge1 : \ C_n(\alpha;X,W)\le   C_m(\alpha;Y,W) - \ell_{n,m} \right)$ is non-increasing which means that the condition on $\ell_{n,m}$ can be an inequality instead of an equality. 

The bound on deciding $H_1^-$ when $H_0$ was true is exactly symmetric and give the same probability of error.

\end{document}